\newtheorem*{theorem*}{Theorem}
\newtheorem*{statement*}{Proposition}
\newtheorem{theorem}{Theorem}[section]
\newtheorem{lemma}{Lemma}[section]
\newtheorem{cor}{Corollary}
\newtheorem{example}{Example}
\newtheorem{rem}{Remark}
\newtheorem*{acks}{Acknowledgments}
\newcommand{\aseq}{\stackrel{\text{a.s.}}{=}}
\newcommand{\asleq}{\stackrel{\text{a.s.}}{\leq}}
\newcommand{\asgeq}{\stackrel{\text{a.s.}}{\geq}}
\newcommand{\pconv}{\xrightarrow{\mathbb{P}}}
\newcommand{\asconv}{\xrightarrow{\text{a.s.}}}
\newcommand{\prob}{\mathbb{P}}
\newcommand{\E}{\mathbb{E}}
\newcommand{\sh}{\sinh}
\newcommand{\ch}{\cosh}
\newcommand{\hup}{H^{(upper)}}
\newcommand{\hlo}{H^{(lower)}}
\newcommand{\hh}{\widetilde{h}}
\title{Asymptotic distribution of the derivative of the taut string accompanying Wiener process}
\author{ M.A. Lifshits, A.A. Podchishchailov}
\date{\today}
\begin{document}
	
\maketitle

\begin{abstract}
    In the article, we find the asymptotic distribution of the derivative of the taut string accompanying a Wiener process $W$ in a strip of fixed width on long time intervals. This enables to find explicit expressions for the limits
\[
  \lim_{T\to\infty} \frac{1}{T} \inf \int_0^T \varphi(h'(t))\, dt,
\]
where infimum is taken over the class of functions $h$ satisfying assumptions
\[
    h\in AC[0,T], \ h(0)=(0),\ h(T)=W(T),\ \max_{t\in [0,T]} |h(t)-W(t)|
    \le \frac {r}{2}. 
\]    
For example, for kinetic energy $\varphi(u)=u^2$ which was considered earlier by
Lifshits and Setterqvist \cite{Lifshits2015}, the limiting value equals to $\tfrac{\pi^2}{6r^2}$.
\end{abstract}

\medskip
{\bf Keywords:} Energy, law of large numbers, taut string, truncated variation, Wiener process.
\medskip

%%%%%%%%%%%%%%%%%%%%%%%%%%%%%%%%%%%%%
\section{Introduction and main results}
%%%%%%%%%%%%%%%%%%%%%%%%%%%%%%%%%%%%%

%%%%%%%%%%%%%%%%%%%
\subsection{Taut strings}
%%%%%%%%%%%%%%%%%%%
First of all, let us recall the notion of taut string and the related background.
A function $\varphi: {\mathbb{R}}\mapsto {\mathbb{R}}$ is called {\it convex}, if
\begin{equation} \label{convex}
  \varphi(\alpha\, x+(1-\alpha)\, y) \le \alpha\, \varphi (x)+(1-\alpha)\, \varphi(y),
  \qquad 0\le \alpha \le 1; x,y\in {\mathbb{R}}.
\end{equation}
A function $\varphi$ is called {\it strictly} convex if the inequality in \eqref{convex}
is strict for $0<\alpha<1$, $x\not= y$.

Let $\text{AC}[T_1,T_2]$ denote the class of all absolutely continuous functions on $[T_1,T_2]$.
For a convex function $\varphi$, consider the integral functional
\[
  G^\varphi_{[T_1,T_2]}(h) 
  := \int_{T_1}^{T_2} \varphi(h'(t))\,dt,
  \qquad h\in \text{AC}[T_1,T_2].
\]
In general, $ G^\varphi_{[T_1,T_2]}(h)$ may be interpreted as a kind of "energy" (or a measure of variability) of the function $h$.
The most interesting examples are $\varphi(u)=\sqrt{u^2+1}$ and $\varphi(u)=\vartheta_p(u):=|u|^p$ for $p\geq 1$. In these cases, $ G^\varphi_{[T_1,T_2]}$ is the graph length of $h$, the total variation of $h$ (for $p=1$) and "kinetic energy" of $h$ (for $p=2$), respectively.
Notice, by the way, that in these examples $\varphi$ is strictly convex, except for $\vartheta_1(u)=|u|$.

Let us now introduce an optimization problem to deal with. Let $\hlo(\cdot),\hup(\cdot)$ be two continuous functions on $[T_1,T_2]$ such that
\[
   \hlo(t)<\hup(t), \qquad T_1\le t\le T_2,
\]
and let $\hh_1,\hh_2$ be the real numbers satisfying
\[
   \hlo(T_i)\le \hh_i\le \hup(T_i), \qquad i=1,2.
\]
Consider now an optimization problem
\begin{equation} \label{general_prob} 
     G^\varphi_{[T_1,T_2]} (h) \searrow \min
\end{equation}
under the constraints $h\in \text{AC}[T_1,T_2]$, $h(T_1)=\hh_1, h(T_2)=\hh_2$, and
\[
      \hlo(t)  \le  h(t) \le  \hup(t), \qquad T_1\le t\le T_2.
\]

This means that we are looking for a function spending the minimal amount of energy among all functions taking an initial value $\hh_1$, a terminal value $\hh_2$ and running between the lower boundary $\hlo(\cdot)$ and the upper boundary $\hup(\cdot)$.

The remarkable facts well known in optimization theory are the following (see e.g. \cite[Appendix]{Schertzer2018} essentially based on 
\cite{Grasmair2007}).
\begin{itemize}
\item  There exists a solution of \eqref{general_prob} which is the same for all convex $\varphi$ given $\hh_1$, $\hh_2$, $\hlo$, $\hup$.
This universal solution is called "taut string".
\item  If $\varphi$ is {\it strictly} convex, then the taut string is the unique solution of \eqref{general_prob}.
\end{itemize}

Note that for a convex but non-strictly convex $\varphi$ the solution  of \eqref{general_prob} need not be unique. In particular, for
$\varphi(u)=|u|$ a solution alternative to the taut string and called "lazy function" may appear for certain initial and terminal conditions.
In short, the lazy function is a function that remains constant in time as long as this is allowed by the boundary conditions. Lazy functions
were studied in \cite{Lochowski2008}--\cite{Lochowski2013b}.

%%%%%%%%%%%%%%%%%%%
\subsection{Taut strings accompanying Wiener process}
%%%%%%%%%%%%%%%%%%%
From now on, we will consider a special case of \eqref{general_prob}, where the upper and the lower boundaries follow the trajectories of a Wiener process $W$. Namely, for $T>0$, $r>0$ we let in \eqref{general_prob}
$T_1:=0$, $T_2:=T$, $\hh_1:=0$, $\hh_2:=W_T$, $\hlo(t):=W_t-r/2$, $\hup(t):=W_t+r/2$ and write $G^\varphi_T$ instead of $G^\varphi_{[0,T]}$.

The corresponding taut string is called {\it the taut string accompanying Wiener process $W$ on the time interval $[0,T]$}. We will denote it $\eta_{T,r}$.
Introducing the notation for uniform norms 
\[ 
          \|g\|_{[T_1,T_2]} := \sup_{T_1\le t\le T_2} |g(t))|, \quad   \|g\|_T := \|g\|_{[0,T]},
\]
we have, by the definition of taut string, 

\begin{equation*}
	G^\varphi_T(\eta_{T,r}) 
         = \int_0^T \varphi(\eta'_{T,r}(t))\, dt
         = I^0_W(T,r,\varphi)
\end{equation*}
where
\begin{equation}   \label{eq:I0_def}
	\begin{aligned}
		I^0_W(T,r,\varphi):=\min \{ G^{\varphi}_T(h);h\in\text{AC}[0,T],\|h-W\|_T \leq \frac{r}{2},h(0)=0,&
		\\
		h(T)=W_T\}&,
	\end{aligned}
\end{equation}
for all $T>0$, $r>0$ and all convex functions $\varphi$.

In the work~\cite{Lifshits2015} the asymptotic behavior of $I^0_W$ 
was considered for kinetic energy, i.e., for the function $\varphi(u):=\vartheta_2(u)=u^2$. It was shown that

\begin{equation}
	\label{eq:L_2_unknown}
	\begin{aligned}
		\lim\limits_{T\rightarrow\infty}
		\frac{1}{T}
		\,
		I^0_W(T,r,\vartheta_2)
		=
		\lim\limits_{T\rightarrow\infty}
		\frac{1}{T}\,
		\int_0^T \eta'_{T,r}(t)^2 \, dt
		=\frac{4\mathcal{C}^2}{r^2}, 
	\end{aligned}
\end{equation}
where one has a.s.-convergence and convergence in $L_q$ for every $q>0$. Here
$\mathcal{C}$  is some positive constant unknown to the authors of~\cite{Lifshits2015}; they provided some theoretical bounds for it and, by computer simulation, obtained an approximate value $\mathcal{C}\approx 0.63$. 
A similar result (with the same limit value) is also true for the free-end problem, i.e. for 
\begin{equation}
	\label{eq:I_def}
		I_W(T,r,\varphi):=\min \{G^{\varphi}_T(h);h\in\text{AC}[0,T],
                   \|h-W\|_T \leq \frac{r}{2},h(0)=0\}
\end{equation}
in place of $I^0_W(T,r,\varphi)$. 

Various extensions of \eqref{eq:L_2_unknown}  (for strips of variable width, for random walks, for unilateral constraints) can be found
in~\cite{Blinova2020,Nikitin2024,Siuniaev2021}.

Later on, in~\cite{Schertzer2018} a result analogous to~\eqref{eq:L_2_unknown}, i.e. the existence of the limit
\begin{equation}
	\label{eq:gen_limit_def}
	R_{r}(\varphi)
	:= \lim\limits_{T\rightarrow\infty}
	\frac{1}{T}\, \int_0^T \varphi(\eta'_{T,r}(t))\, dt
\end{equation}
was obtained for a rather broad class of functions $\varphi$ (asserting convergence to an unknown constant depending on $\varphi$), as well as a central limit theorem for  $G^\varphi_T(\eta_{T,r})$. The main assumption here is a power bound on the growth of $\varphi$ at infinity, i.e., one should assume

\begin{equation} \label{eq:power_bound}
		\lim\limits_{|u|\to \infty} \frac{\varphi(u)}{|u|^\alpha}=0
\end{equation}
for some $\alpha>0$.
\medskip
We stress that convexity of $\varphi$ is not assumed in the result but in the non-convex case
the connection to the optimizaton problem is lost and therefore the result becomes much less interesting.

Our initial goal was to find more explicit expressions for the limits \eqref{eq:gen_limit_def}.
This goal is achieved (in particular, the constant from ~\eqref{eq:L_2_unknown} is identified as $\mathcal{C}=\frac{\pi}{\sqrt{24}}$); it became clear that the existence of the limits~\eqref{eq:gen_limit_def} is based on the asymptotic behavior of the distribution (normed sojourn measure) of the derivative of the taut string, i.e. the measure $\nu_{T,r}$ on $\mathbb{R}$ such that

\begin{equation}	\label{eq:nu_def}
	\int\limits_{\mathbb{R}} f(u)\, \nu_{T,r}(du)
	=	\frac{1}{T} \int_0^T f(\eta'_{T,r}(t))\, dt
\end{equation}
for every measurable bounded function $f$. Essentially, for an $r$ fixed, the measures $\nu_{T,r}$ converge weakly, as $T\rightarrow\infty$, to a limiting measure $\nu_{\infty,r}$; the continuous density of the latter admits an explicit computation, namely,

\begin{equation}
	\label{eq:dens_lim_meas}
	p_{\infty,r}(u)
	:= \frac{d\nu_{\infty,r}(u)}{du}
	=
	\begin{cases}
		r\ \frac{ru\coth(ru)-1}{\sh^2(ru)},\ &u\neq 0;\\
		\frac{1}{3}\, r,\ &u=0.
	\end{cases}
\end{equation}
Heuristically, this leads us to equalities

\begin{equation}
\label{eq:R_int_meas}
\begin{aligned}
	R_{r}(\varphi)
	=&
	\lim\limits_{T\rightarrow\infty}
	\frac{1}{T}  \int_0^T \varphi(\eta'_{T,r}(t))\, dt
	=	\lim\limits_{T\rightarrow\infty}
	\int\limits_{\mathbb{R}} \varphi(u)\, \nu_{T,r}(du)
	\\
	=&\int\limits_{\mathbb{R}} \varphi(u)\, \nu_{\infty,r}(du)
	=	\int\limits_{\mathbb{R}} \varphi(u)p_{\infty,r}(u)\, du,
\end{aligned}
\end{equation}
which solves the problem of finding the limit~\eqref{eq:gen_limit_def}; 
convergence of sojourn measures is not related to particular functions $\varphi$. 
For example, substituting  the quadratic function from~\eqref{eq:L_2_unknown}, we find (cf. Section~\ref{sec:t2_calc} below) 

\begin{equation*}
	\begin{aligned}
	4r^{-2}\mathcal{C}^2
	=&
	\int\limits_{\mathbb R} u^2r \frac{(ru\coth(ru)-1)}{\sh^2(ru)} \, du
	= \frac{\pi^2}{6r^2}.
	\end{aligned}
\end{equation*}
Thus,

\begin{equation*}
	\mathcal{C}=\frac{\pi}{\sqrt{24}}\approx 0.641,
\end{equation*}
which agrees well with the simulated value from~\cite{Lifshits2015} mentioned above.
\medskip

Our approach to the rigorous confirmation of~\eqref{eq:R_int_meas} goes through consideration of some special family of functions $\varphi$.
Recall the usual notation
\[
    v^+:= \max\{v,0\}, \quad v^-:= \max\{-v,0\}, \qquad v\in \mathbb{R},
\]
and define the family of functions
\begin{equation} \label{eq:rhomu}
	\rho_\mu^{+}(u):=(u-\mu)^+,\ \mu\in\mathbb{R}.
\end{equation}

For handling these functions, we need the notion and properties of 
truncated variation defined and studied in the works of R. Łochowski, 
cf.~\cite{Lochowski2008, Lochowski2011, Lochowski2013b, Lochowski2013}. 
We will show that the results of~\cite{Lochowski2013} yield

\begin{equation}   \label{eq:rho^+_val_def}
	R_{r}(\rho^{+}_\mu)=
	\int\limits_{\mathbb{R}}   \rho^{+}_\mu(u)p_{\infty,r}(u)\, du
	=
	\begin{cases}
		\frac{1}{2}(\mu \coth{r\mu}-\mu),\ &\mu\neq 0;\\
		\frac{1}{2}r^{-1},\ &\mu=0,
	\end{cases}
\end{equation}
as a special case of~\eqref{eq:R_int_meas}. 
Furthermore, for similar functions $\rho^{-}_{\mu}(u):=(u-\mu)^{-}=\rho^{+}_{-\mu}(-u)$ one has 
$R_{r}(\rho^{-}_{\mu})=R_{r}(\rho^{+}_{-\mu})$ by the symmetry of Wiener process distribution. 
The weak convergence $\nu_{T,r}\Rightarrow\nu_{\infty,r}$, as $T\rightarrow\infty$, also follows from~\eqref{eq:rho^+_val_def}.

Since sufficiently general convex functions can be represented as a mix of the functions
$\rho^{+}_\mu$ and $\rho^{-}_\mu$, by using linearity of the functional $R_{r}(\cdot)$, 
one can obtain~\eqref{eq:R_int_meas} for those functions, too.

Unfortunately, when considering the limits like \eqref{eq:R_int_meas} rigorously, 
one has to distinguish a.s.-convergence and convergence in probability.
Our main result in this direction is as follows.

\begin{theorem}	\label{th:main}
%%For every convex function $\varphi$ it is true that
If a convex function $\varphi$ satisfies
\begin{equation} \label{eq:lambda}
\int\limits_{\mathbb{R}}  \varphi(u) e^{-2\lambda|u|} du <\infty
\quad \textrm{for some } \lambda\in (0,r)
\end{equation}
or
\begin{equation} \label{eq:rinfty}
     \int\limits_{\mathbb{R}}  \varphi(u) \, p_{\infty,r}(u) du =\infty
\end{equation}
then
\begin{equation} \label{main_pconv}
  \frac{1}{T}  \int_0^T \varphi(\eta'_{T,r}(t))\, dt
  \pconv \int\limits_{\mathbb{R}} \varphi(u) \,  p_{\infty,r}(u)\, du  
\end{equation}
and
\begin{equation} \label{main_lower}
   \liminf_{T\to\infty} \frac{1}{T}  \int_0^T \varphi(\eta'_{T,r}(t))\, dt
   \asgeq \int\limits_{\mathbb{R}} \varphi(u) \,  p_{\infty,r}(u)\, du.    
\end{equation}

If a function $\varphi$ is continuous almost everywhere, locally bounded and satisfies power growth condition \eqref{eq:power_bound}, then
\begin{equation} \label{main_asconv}
      \frac{1}{T}  \int_0^T \varphi(\eta'_{T,r}(t))\, dt
      \asconv \int\limits_{\mathbb{R}} \varphi(u) \, p_{\infty,r}(u)\,du.
\end{equation}
\end{theorem}
\medskip

\begin{rem} Actually the gap between \eqref{eq:lambda} and \eqref{eq:rinfty} which is not covered by Theorem \ref{th:main} is very narrow. Since the density $p_{\infty,r}(\cdot)$ is locally bounded and
\begin{equation} \label{eq:pasymp}
   p_{\infty,r}(u) \sim \frac{4r^2|u|}{e^{2 r |u|}}, \qquad \textrm{as } |u|\to\infty, 
\end{equation}
the gap consists of the functions $\varphi$ such that 
\[
    \int\limits_{\mathbb{R}} \varphi(u)\, p_{\infty,r}(u) du <\infty
\]
but 
\[
    \int\limits_{\mathbb{R}} \varphi(u)  p_{\infty,\lambda}(u) du =\infty 
    \qquad \textrm{for all } \lambda\in (0,r).
\]
We believe that \eqref{eq:lambda} is just a technical condition and convergence in probability actually holds for every convex function $\varphi$.

Note also that \eqref{eq:lambda} is much weaker than \eqref{eq:power_bound}.
\end{rem}

\medskip
In general, one can not replace in~\eqref{main_pconv} convergence in probability by a.s.-convergence because of the following result. 

\begin{theorem} \label{th:contrex}
Let $\lambda>1$, $r>0$ and $\varphi(u)\ge e^{\lambda r u}$ for all sufficiently large
$u\in \mathbb{R}$. Then
	\begin{equation*}
		\limsup_{T\to\infty} \frac 1T \int_0^T\varphi(\eta'_{T,r}(t))\, dt \aseq +\infty. 
	\end{equation*}
\end{theorem}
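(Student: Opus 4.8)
The plan is to show that with probability one the Wiener process produces, infinitely often, time windows on which the taut string is forced to have a large derivative on a non-negligible portion of the window, and that this already makes the exponential functional blow up along a sequence $T_n \to \infty$. Concretely, I would first isolate the following geometric fact about the taut string in a strip of width $r$: if on some interval $[a,b]$ the Wiener path increases by more than $(b-a)\cdot v + r$, then the taut string $\eta_{T,r}$ must satisfy $\eta_{T,r}'(t) \ge v$ on a subinterval of $[a,b]$ whose length is bounded below by a fixed fraction of $b-a$ (say $\ge \tfrac12(b-a)$, possibly after adjusting constants), simply because the string cannot leave the $r/2$-tube around $W$ and hence its total increase over $[a,b]$ must also be at least $(b-a)v + r - r = (b-a)v$; since $\eta_{T,r}'$ is monotone on any interval where the string does not touch the boundary and more generally the taut string is the "slowest-turning" path in the tube, one gets a uniform lower bound on the Lebesgue measure of $\{t\in[a,b] : \eta_{T,r}'(t) \ge v\}$. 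I would state this as a deterministic lemma about taut strings.

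Next I would choose the scaling. Fix $\lambda > 1$ and pick $v$ with $1 < v/\lambda$ in the exponent sense — more precisely, I want to exploit that $\varphi(u) \ge e^{\lambda r u}$ and a window of length $\ell$ on which $\eta_{T,r}' \ge v$ contributes at least $\tfrac{\ell}{2} e^{\lambda r v}$ to the integral $\int_0^T \varphi(\eta'_{T,r})\,dt$; dividing by $T$, if the window sits inside $[0,T]$ with $T \asymp$ (window length) we get a contribution of order $e^{\lambda r v}$, which we can drive to $+\infty$ by letting $v\to\infty$. So the real task is a probabilistic one: show that along some deterministic sequence $T_n\to\infty$, almost surely, the interval $[0,T_n]$ contains a subinterval $[a_n,b_n]$ of length $\ell_n$ with $\ell_n / T_n$ bounded below, on which $W(b_n) - W(a_n) \ge \ell_n v_n + r$ with $v_n \to \infty$. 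This is a Borel–Cantelli / small-ball-in-reverse argument: by Brownian scaling, $\prob(W(b) - W(a) \ge \ell v + r)$ with $\ell = b-a$ equals $\prob(N(0,1) \ge v\sqrt{\ell} + r/\sqrt{\ell})$, which for $\ell$ growing and $v = v(\ell)$ chosen so that $v\sqrt{\ell}$ grows slowly (e.g. $v\sqrt{\ell} \sim \sqrt{2\log\log\ell}\,$ type scaling, or simply $v$ fixed and large with $\ell$ along a geometric sequence and using independence of increments over disjoint dyadic-type blocks) gives a divergent sum of probabilities over independent blocks, so by the second Borel–Cantelli lemma infinitely many blocks succeed almost surely. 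One then checks that $v_n$ can be taken $\to\infty$ while keeping the per-block probability summable-from-below — this is the standard tradeoff and I would phrase it so that for every fixed $v$, a.s. infinitely many blocks of the geometric partition have the required large increment, hence $\limsup_T \frac1T\int_0^T \varphi(\eta') \ge \tfrac14 e^{\lambda r v}$ a.s. for every $v$, and then let $v\to\infty$ to conclude the $\limsup$ is $+\infty$ a.s.

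The main obstacle I expect is the deterministic taut-string lemma: one must be careful that the taut string, which is pinned at $0$ and at $W(T)$ and confined to the tube, really is forced to a large derivative on a \emph{macroscopic} fraction of the bad window, rather than achieving the required increase through a very steep segment of tiny length (which would contribute little to the integral). The key point making this work is convexity of $\varphi$ combined with monotonicity properties of $\eta_{T,r}'$ (it is of bounded variation and changes monotonicity type only when $W$ touches the boundary of the tube), so that concentrating the increase into a short steep segment is incompatible with the taut string being the minimizer for, say, $\vartheta_2$; alternatively one can argue directly from the explicit local structure of the taut string (it is piecewise linear between "touch points" of $W$ with the two boundaries). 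A secondary technical point is bookkeeping with the endpoint condition $h(T) = W(T)$ and the requirement $h(0) = 0$, but since we only need a lower bound on the integral over a window strictly inside $(0,T)$, the boundary pinning only helps (it cannot decrease the energy), so this should be routine. I would also double check the exact constants so that the hypothesis "$\varphi(u) \ge e^{\lambda r u}$ for large $u$" with $\lambda > 1$ is exactly what the block-probability tradeoff needs — the condition $\lambda>1$ should appear precisely when balancing $e^{\lambda r v}$ against the Gaussian cost $\approx e^{-v^2\ell/2 - \dots}$ along the chosen sequence.
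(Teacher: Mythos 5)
The central difficulty here is probabilistic, and your scaling for it does not work. You ask for windows $[a_n,b_n]\subset[0,T_n]$ whose length $\ell_n$ is a fixed fraction of $T_n$ and on which $W(b_n)-W(a_n)\ge \ell_n v+r$ with $v$ fixed (or $v_n\to\infty$). But $\prob\bigl(W(b)-W(a)\ge \ell v+r\bigr)=\prob\bigl(\mathcal{N}\ge v\sqrt{\ell}+r/\sqrt{\ell}\bigr)\approx e^{-v^2\ell/2}$, so along a geometric sequence $\ell_n\to\infty$ these probabilities are summable for every fixed $v>0$ and Borel--Cantelli gives only \emph{finitely} many successes (equivalently, by the law of the iterated logarithm the slope of $W$ over macroscopic windows tends to $0$ a.s.). In your alternative regime $v\sqrt{\ell}\sim\sqrt{2\log\log\ell}$ one has $v_n\to 0$, so $e^{\lambda r v_n}$ stays bounded and no blow-up results. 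In both regimes the argument collapses, and tellingly the hypothesis $\lambda>1$ is never used in your balance. The paper resolves this with the opposite scaling: a window of \emph{shrinking} length $a/\ln T$ (with $a=r^2/2$) placed at the right end of $[0,T]$, on which one only asks for the fixed increment $W(T)-W(T-a/\ln T)\ge r$. The probability of this event at $T_n=n$ is of order $(\ln n)^{1/2}n^{-1}$, so the Borel--Cantelli sum diverges over the (eventually disjoint, hence independent) windows; and since the string is pinned at $W(T)$ at the right end, only one tube loss of $r/2$ occurs, forcing an average slope $\ge \tfrac{r}{2}\,\ln T/a=\ln T/r$, so that $\varphi(\text{slope})\ge T^{\lambda}$ and the window contributes at least $\tfrac{a}{\ln T}\,T^{\lambda}\gg T$. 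This is exactly where $\lambda>1$ enters.

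The deterministic lemma you flag as the main obstacle (derivative $\ge v$ on a macroscopic fraction of the window) is unnecessary, and your worry about it points the wrong way: since $\varphi$ is convex (as all functions in this paper are), Jensen's inequality gives directly $\int_a^b\varphi(g'(t))\,dt\ge (b-a)\,\varphi\bigl(\tfrac{g(b)-g(a)}{b-a}\bigr)$ for any admissible $g$, so only the average slope over the window matters; concentrating a given rise into a steeper, shorter stretch can only \emph{increase} $\int\varphi(g')$, not decrease it. No structural facts about the taut string are needed beyond its being in the tube and satisfying $\eta_{T,r}(T)=W(T)$; that endpoint condition, which you treat as routine bookkeeping, is in fact what saves one loss of $r/2$ and lets the argument cover every $\lambda>1$ rather than only larger values of $\lambda$.
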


Notice that if $\varphi(u)=e^{\lambda r u}$ with $\lambda\in (1,2)$, then the assumption
of Theorem~\ref{th:contrex} is satisfied and, in view of the density asymptotics of the limiting measure \eqref{eq:pasymp}, it is true that
\[  
   \int_0^T \varphi(u) \, p_{\infty,r}(u)\,du <\infty.
\]
Therefore, there is no a.s. convergence in~\eqref{main_pconv}. 
It would be interesting to identify an exact upper bound for the growth 
of $\varphi$ beyond  which a.s.-convergence may be lost.
\medskip

The article is organized as follows. In Section~\ref{sec:TV} we recall the necessary information on truncated variation; this notion is applied to our problem in
Section~\ref{sec:asymp}. Convergence of sojourn measures and the lower 
bound~\eqref{main_lower} are obtained in Section~\ref{sec:asympt_meas}. 
In Section~\ref{sec:pconv} we prove the main result of the article -- the weak law of large numbers~\eqref{main_pconv}. 
In Section~\ref{sec:SLLN} we prove the strong law of large numbers~\eqref{main_asconv}. 
Finally, Section~\ref{sec:addendum} contains some secondary but necessary proofs including the proof of Theorem~\ref{th:contrex}.
\medskip

%%%%%%%%%%%%%%%%%%%%%%%%%%%%%%%%%%%%%%%%%%%%%%%%%%%%%%%%%%%%%%%%%%%%%%%%%
\section{Truncated variation}
%%%%%%%%%%%%%%%%%%%%%%%%%%%%%%%%%%%%%%%%%%%%%%%%%%%%%%%%%%%%%%%%%%%%%%%%%
\label{sec:TV}

%%%%%%%%%%%%%%%%
\subsection{Basic notions and properties}
%%%%%%%%%%%%%%%%

Let us recall the main concepts and results from the articles~\cite{Lochowski2008,Lochowski2011,Lochowski2013,Lochowski2013b}. 
{ \it Truncated variation} of a c\`adl\`ag function $f$  on an interval $a,b]$ for a given $r\ge 0$ is defined as
\begin{equation*}
	\begin{aligned}
		TV^r(f,[a,b]):=
		\sup_n
		\sup_{a\leq t_1<t_2< \ldots < t_n\leq b}
		\sum\limits_{k=1}^{n-1}
		\rho^{+}_r(|f(t_{k+1})-f(t_{k})|),
	\end{aligned}
\end{equation*}
where expression $\rho^{+}_r(\cdot)$ was defined in \eqref{eq:rhomu}. Notice that $TV^0(f,[a,b])$ is the standard total variation of $f$.

Truncated variation can be expressed as the solution of the following optimization problem
\begin{equation}
	\label{eq:TV_opt}
	TV^{r}(f,[a,b])
	=
	\inf
	\bigg\{
	TV^0(g,[a,b]); \| g-f \|_{[a,b]}\leq \frac{r}{2}
	\bigg\}.
\end{equation}
For continuous $f$, there is a continuous function (lazy function) at which the minimum in~\eqref{eq:TV_opt} is attained.

Moreover, if $f$ is continuous, then we can consider the infimum in~\eqref{eq:TV_opt} only over the class of absolutely continuous functions. Recall that for AC functions the total variation can be represented as
\begin{equation} \label{eq:TV_integral}
	TV^0(g,[a,b])
	=	\int_{a}^b |g'(t)|\, dt,
	  \quad g\in\text{AC}[a,b].
\end{equation}

\begin{lemma} \label{lemma:TVr_AC}
Let $f$ be continuous on an interval $[a,b]$; Then for any $r>0$
	\begin{equation}
            \label{eq:TVAC}
		TV^r(f,[a,b]) = \inf\left\{ \int_a^b |g'(t)| dt ; \  g\in {\normalfont\text{AC}}[a,b], \|g-f\|_{[a,b]}\le \frac r2  \right\}.
	\end{equation}
\end{lemma}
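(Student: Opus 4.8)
The plan is to derive Lemma~\ref{lemma:TVr_AC} from the already-stated optimization formula~\eqref{eq:TV_opt} together with the integral representation~\eqref{eq:TV_integral} of the total variation of absolutely continuous functions. Write $J$ for the right-hand side of~\eqref{eq:TVAC}, i.e.\ the infimum of $\int_a^b |g'(t)|\,dt$ over $g\in\text{AC}[a,b]$ with $\|g-f\|_{[a,b]}\le r/2$. The inequality $TV^r(f,[a,b])\le J$ is the easy direction: every admissible AC competitor $g$ in $J$ is in particular a c\`adl\`ag (indeed continuous) function with $\|g-f\|_{[a,b]}\le r/2$, and by~\eqref{eq:TV_integral} its total variation is exactly $\int_a^b|g'(t)|\,dt$; hence by~\eqref{eq:TV_opt} we get $TV^r(f,[a,b])\le TV^0(g,[a,b])=\int_a^b|g'(t)|\,dt$, and taking the infimum over such $g$ gives $TV^r(f,[a,b])\le J$.

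For the reverse inequality $J\le TV^r(f,[a,b])$, I would invoke the two sentences in the excerpt immediately preceding the lemma: since $f$ is continuous, the infimum in~\eqref{eq:TV_opt} may be restricted to AC functions $g$ with $\|g-f\|_{[a,b]}\le r/2$, and for any such $g$ we have $TV^0(g,[a,b])=\int_a^b|g'(t)|\,dt\ge J$. Taking the infimum over these $g$ yields $TV^r(f,[a,b])\ge J$. Combining the two inequalities gives~\eqref{eq:TVAC}.

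The only genuine content, then, is the claim quoted in the excerpt that for continuous $f$ the infimum in~\eqref{eq:TV_opt} can be taken over AC functions without changing its value. If this is to be proved here rather than merely cited, the argument goes through the lazy (continuous) minimizer $g_0$ of~\eqref{eq:TV_opt}, whose existence is asserted in the excerpt: $g_0$ is continuous, of finite total variation, and satisfies $\|g_0-f\|_{[a,b]}\le r/2$. One then mollifies $g_0$ to produce AC approximants. Concretely, extend $g_0$ continuously past the endpoints by constants, convolve with a smooth approximate identity $\kappa_\varepsilon$ to obtain $g_\varepsilon:=g_0*\kappa_\varepsilon\in\text{AC}[a,b]$ (in fact $C^\infty$), and check that $\|g_\varepsilon-f\|_{[a,b]}\to 0$ relative to the slack: since $g_0$ is continuous hence uniformly continuous on a slightly enlarged interval, $\|g_\varepsilon-g_0\|_{[a,b]}\to 0$, so for small $\varepsilon$ one has $\|g_\varepsilon-f\|_{[a,b]}\le r/2 + o(1)$; a harmless rescaling toward $f$ (replace $g_\varepsilon$ by $f$-centred contraction, or simply use that $\|g_0-f\|_{[a,b]}$ may be taken strictly below $r/2$ by an initial $\delta$-perturbation of $g_0$) restores the hard constraint. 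Finally, lower semicontinuity of total variation under uniform convergence gives $\liminf_\varepsilon TV^0(g_\varepsilon,[a,b])\ge$ nothing useful in the wrong direction — instead one uses the \emph{upper} bound $TV^0(g_\varepsilon,[a,b])\le TV^0(g_0,[a,b])$, valid because convolution with a probability kernel does not increase total variation. Hence each $g_\varepsilon$ is an AC competitor with total variation at most that of the optimal $g_0$, proving that the AC-restricted infimum equals $TV^r(f,[a,b])$.

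The main obstacle is this last density/mollification step, specifically the bookkeeping of the constraint $\|g-f\|\le r/2$ under mollification: one must ensure the approximation does not violate the strict-width requirement, which is why I would first shrink $g_0$ slightly toward $f$ to get strict inequality $\|g_0-f\|_{[a,b]}<r/2$ (this can be done at the cost of an arbitrarily small increase in total variation, since contracting toward a fixed continuous function is a bounded linear operation on $BV$), and only then mollify. Everything else is routine: the total-variation-non-increasing property of convolution with a probability density, and the identity~\eqref{eq:TV_integral} for AC functions. If the paper is content to cite~\cite{Lochowski2011,Lochowski2013} for the existence and properties of the lazy minimizer and for the AC-reduction in~\eqref{eq:TV_opt}, then the lemma reduces to the two-line sandwich argument in the first two paragraphs above.
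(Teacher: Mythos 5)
Your overall strategy for the nontrivial direction coincides with the paper's: the inequality $TV^r(f,[a,b])\le J$ is immediate from~\eqref{eq:TV_opt} and~\eqref{eq:TV_integral}, and for the reverse one both you and the paper start from a continuous (lazy) minimizer of~\eqref{eq:TV_opt}, extend it by constants, and mollify, using that convolution with a probability kernel does not increase total variation. (One caveat: the sentence preceding the lemma cannot simply be ``invoked'' for the reverse inequality, since the AC-reduction is exactly what the lemma asserts and the paper states explicitly that it is not contained in \L{}ochowski's articles; you do acknowledge this and supply the mollification argument, so the real issue lies elsewhere.)

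The genuine gap is in how you restore the constraint $\|g-f\|_{[a,b]}\le \frac r2$ after mollification. Your main device --- contracting toward $f$, i.e.\ replacing $g_0$ by $g_\theta:=f+(1-\theta)(g_0-f)=\theta f+(1-\theta)g_0$ --- is \emph{not} ``a bounded linear operation on $BV$'' unless $f$ itself has bounded variation: if $TV^0(f,[a,b])=\infty$ (and the relevant case here is $f=W$, a Wiener path, which has infinite variation a.s.), then writing $\theta f=g_\theta-(1-\theta)g_0$ shows $TV^0(g_\theta,[a,b])=\infty$, so the contracted function is useless as a competitor; even for $f\in BV$ the increase in variation is of order $\theta\, TV^0(f,[a,b])$, not automatically small. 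Your fallback (``an initial $\delta$-perturbation of $g_0$'') is unspecified and faces the same difficulty. The paper sidesteps this entirely: it takes the minimizer at the \emph{smaller} radius $r-\varepsilon$, so that the slack $\varepsilon/2$ absorbs the uniformly small mollification error and yields an AC competitor with variation at most $TV^{r-\varepsilon}(f,[a,b])$, and then lets $\varepsilon\to 0$ using the continuity of $r\mapsto TV^r(f,[a,b])$ cited from~\cite{Lochowski2013}. This continuity-in-$r$ step (or some correct substitute for creating slack in the width constraint) is the missing ingredient in your argument; once it is added, your proof becomes essentially the paper's.
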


The {\it up\-ward truncated variation}  $UTV^{r}(f,[a,b])$ and the {\it down\-ward truncated variation}  $DTV^{r}(f,[a,b])$ are defined in the same way:
\begin{equation*}
	\begin{aligned}
		UTV^r(f,[a,b]):=
		\sup_n
		\sup_{a\leq t_1<t_2< \ldots < t_n\leq b}
		\sum\limits_{k=1}^{n-1}
		\rho^{+}_r(f(t_{k+1})-f(t_{k})) 
	\end{aligned}
\end{equation*}
and $DTV^{r}(f,[a,b]):=UTV^{r}(-f,[a,b])$.

Similarly to~\eqref{eq:TV_opt}, there is the following representation
\[
%%	\label{eq:UTV_opt}
	UTV^{r}(f,[a,b])
	=
	\inf
	\bigg\{
	UTV^0(g,[a,b]); \| g-f \|_{[a,b]}\leq \frac{r}{2}
	\bigg\}.
\]

Similarly to~\eqref{eq:TV_integral}, there is an integral representation for  AC functions:
\begin{equation} \label{eq:UTV_integral}
	UTV^0(g,[a,b])
	=
	\int_{a}^b
	g'(t)^+\, dt,
	\quad
	g\in\text{AC}[a,b].
\end{equation}

Finally, similarly to~\eqref{eq:TVAC}, there is the following representation
\begin{equation} \label{eq:UTVAC}
		UTV^r(f,[a,b]) = \inf\left\{ \int_a^b g'(t)^+ dt ; \  g\in {\normalfont\text{AC}}[a,b], 
        \|g-f\|_{[a,b]}\le \frac r2  \right\}.
\end{equation}

Since the representations~\eqref{eq:TVAC} and~\eqref{eq:UTVAC} are not considered in the mentioned Łochowski's articles, we will prove them in Section~\ref{subsec:TVAC}.

%%%%%%%%%%%%%%%%%
\subsection{Truncated variation of the Wiener process}
%%%%%%%%%%%%%%%%%
In the article~\cite{Lochowski2013}, the asymptotics of truncated variations of a Wiener process with a drift is found. This result is important for our subsequent calculations.

\begin{theorem}[\cite{Lochowski2013}]
	\label{th:TV_asympt}
	Let $(W_t)_{t\geq 0}$ be a Wiener process; for $\mu\in\mathbb{R}$ define
	\begin{equation*}
		X_t := W_t- \mu t,\quad t\geq 0.
	\end{equation*}
	Then, as $T\to +\infty$,
	\begin{equation*}
		\begin{aligned}
		\frac{1}{T}\,TV^{r}(X,[0,T])
		&\asconv
		m_{r}(\mu),
            \\
            \frac{1}{T}\,UTV^{r}(X,[0,T])
		&\asconv
		q_r(\mu),
		\end{aligned}
	\end{equation*}
	where
	\begin{equation}
		\label{eq:kappa_lim_value}
		m_{r}(\mu)=
		\begin{cases}
			\mu \coth({r\mu}),\ &\mu\neq 0;\\
			r^{-1},\ &\mu=0;
		\end{cases}
        \end{equation}
        \begin{equation}
        \label{eq:rho_lim_value}
		q_r(\mu)=
		\begin{cases} \frac12 (\mu \coth(r\mu)-\mu),& \mu \not=0; \\
		\frac12 r^{-1},& \mu=0.
	\end{cases}
	\end{equation}
\end{theorem}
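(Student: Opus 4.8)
The plan is to realize the truncated variations of the continuous process $X_t=W_t-\mu t$ as functionals of a reflected Brownian motion with drift living on a compact interval, and then apply the ergodic theorem for that diffusion. First I would use the optimization representations recalled above: by Lemma~\ref{lemma:TVr_AC} and~\eqref{eq:UTVAC}, $TV^r(X,[0,T])$ and $UTV^r(X,[0,T])$ are, respectively, the minimal total variation and the minimal upward variation of a function $g$ kept inside the strip $\|g-X\|_{[0,T]}\le r/2$. For a continuous driving function the minimizer may be taken to be the ``lazy'' function that stays put while $g-X\in(-r/2,r/2)$ and receives the minimal push needed to stay in the band; writing $\theta_t:=g_t-X_t$, the band constraint and the minimality of the push say exactly that $\theta$ solves the Skorokhod reflection problem in $[-r/2,r/2]$ for the driving path $-X$, and that
\[
   g_t=g_0+L^{\mathrm{lo}}_t-L^{\mathrm{up}}_t,
\]
where $L^{\mathrm{lo}},L^{\mathrm{up}}$ are the nondecreasing boundary local times at $-r/2$ and $r/2$. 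Since these have disjoint support, $TV^0(g,[0,T])=L^{\mathrm{lo}}_T+L^{\mathrm{up}}_T$ and $UTV^0(g,[0,T])=L^{\mathrm{lo}}_T$; hence $\tfrac1T\,TV^r(X,[0,T])=\tfrac1T(L^{\mathrm{lo}}_T+L^{\mathrm{up}}_T)+o(1)$ and $\tfrac1T\,UTV^r(X,[0,T])=\tfrac1T L^{\mathrm{lo}}_T+o(1)$, the $o(1)$ absorbing the $O(1)$ ambiguity in the choice of $g$ near the endpoints.

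Since $-X_t=\mu t-W_t$ is a Brownian motion with drift $\mu$, the process $\theta$ is a reflected Brownian motion with drift $\mu$ on the compact interval $[-r/2,r/2]$; it is Feller, positive recurrent and uniquely ergodic, with invariant probability measure $\pi$ (density proportional to $e^{2\mu x}$, uniform if $\mu=0$). As $L^{\mathrm{lo}}$ and $L^{\mathrm{up}}$ are additive functionals, the ergodic theorem (Birkhoff on path space under $\prob_\pi$, transferred to the actual start $\theta_0=g_0$ by recurrence) gives $\tfrac1T L^{\mathrm{lo}}_T\asconv\E_\pi L^{\mathrm{lo}}_1$ and $\tfrac1T L^{\mathrm{up}}_T\asconv\E_\pi L^{\mathrm{up}}_1$. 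Together with the previous paragraph this already yields a.s.\ convergence of $\tfrac1T TV^r$ and $\tfrac1T UTV^r$ to (so far unidentified) constants.

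To evaluate the limits I would apply Itô's formula: for smooth $f$,
\[
   f(\theta_t)=f(\theta_0)+\text{(martingale)}+\int_0^t\big(\tfrac12 f''+\mu f'\big)(\theta_s)\,ds+f'(-\tfrac r2)L^{\mathrm{lo}}_t-f'(\tfrac r2)L^{\mathrm{up}}_t,
\]
and take expectations under $\prob_\pi$ with $t=1$: the left-hand increment and the martingale vanish, leaving $0=\int_{-r/2}^{r/2}(\tfrac12 f''+\mu f')\,d\pi+f'(-\tfrac r2)\E_\pi L^{\mathrm{lo}}_1-f'(\tfrac r2)\E_\pi L^{\mathrm{up}}_1$. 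Choosing $f(x)=x$ gives $\E_\pi L^{\mathrm{up}}_1-\E_\pi L^{\mathrm{lo}}_1=\mu$, and choosing $f$ with $\tfrac12 f''+\mu f'\equiv 0$, i.e.\ $f'(x)\propto e^{-2\mu x}$, gives $\E_\pi L^{\mathrm{lo}}_1=e^{-2\mu r}\,\E_\pi L^{\mathrm{up}}_1$; solving this $2\times2$ system yields $\E_\pi L^{\mathrm{lo}}_1=\mu/(e^{2\mu r}-1)$ and $\E_\pi L^{\mathrm{up}}_1=\mu/(1-e^{-2\mu r})$. Therefore $\tfrac1T\,UTV^r(X,[0,T])\asconv\mu/(e^{2\mu r}-1)=\tfrac12(\mu\coth(r\mu)-\mu)=q_r(\mu)$ and $\tfrac1T\,TV^r(X,[0,T])\asconv\E_\pi(L^{\mathrm{lo}}_1+L^{\mathrm{up}}_1)=\mu\coth(r\mu)=m_r(\mu)$; the case $\mu=0$ follows by continuity (or by repeating the computation with $f(x)=x^2$ and $\pi$ uniform, which gives $\tfrac1{2r}$ and $\tfrac1r$).

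The main obstacle is the first step: turning Łochowski's combinatorially defined $TV^r$ and $UTV^r$ into the pathwise reflected process, and checking that one and the same function realizes the infima in both \eqref{eq:TVAC} and \eqref{eq:UTVAC} together with the bookkeeping of the $O(1)$ endpoint contributions. The remaining ingredients -- existence, uniqueness and ergodicity of the invariant law of a nondegenerate one-dimensional diffusion reflected in a compact interval, finiteness of $\E_\pi L^{\mathrm{lo}}_1$ (immediate on a bounded state space), and passage from the stationary start to the fixed initial condition in the ergodic theorem -- are standard.
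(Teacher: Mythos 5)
This theorem is not proved in the paper at all: it is imported verbatim from \cite{Lochowski2013} (the authors only use it, together with the superadditivity of $UTV^r$ in $T$), so there is no in-paper argument to compare yours against. Judged on its own, your derivation is sound and its computations are correct. The identification of $\theta_t=g_t-X_t$ with the doubly reflected Brownian motion with drift $\mu$ on $[-r/2,r/2]$, the formulas $TV^0(g)=L^{\mathrm{lo}}_T+L^{\mathrm{up}}_T$ and $UTV^0(g)=L^{\mathrm{lo}}_T$ from the disjointness of the supports of the pushes, and the stationary It\^o identity all check out: the two test functions $f(x)=x$ and $f'(x)=e^{-2\mu x}$ give $\E_\pi L^{\mathrm{up}}_1-\E_\pi L^{\mathrm{lo}}_1=\mu$ and $\E_\pi L^{\mathrm{lo}}_1=e^{-2\mu r}\E_\pi L^{\mathrm{up}}_1$, whence $\E_\pi L^{\mathrm{lo}}_1=\mu/(e^{2\mu r}-1)=\tfrac12(\mu\coth(r\mu)-\mu)=q_r(\mu)$ and $\E_\pi(L^{\mathrm{lo}}_1+L^{\mathrm{up}}_1)=\mu\coth(r\mu)=m_r(\mu)$; the signs are consistent with the sanity check that $q_r(\mu)\to 0$ as $\mu\to+\infty$ (strong downward drift of $X$ kills the upward truncated variation). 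The case $\mu=0$ via $f(x)=x^2$ is also right.

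The one place where real work remains is exactly the step you flag: passing from Łochowski's combinatorial definitions to the reflected path. Two remarks there. First, it is not literally true that ``one and the same function realizes the infima'' in \eqref{eq:TVAC} and \eqref{eq:UTVAC}: the optimal starting point $g_0$ in the band differs for the two problems (for $UTV^r$ one should start at the top of the band), but the discrepancy is $O(r)$ uniformly in $T$, so your $o(1)$ after dividing by $T$ absorbs it --- this should be said explicitly rather than folded into ``the same function''. Second, the lower bound $\inf\int_0^T (g')^+\,dt \ge L^{\mathrm{lo}}_T-O(r)$ over all admissible $g$ (not just the lazy one) needs an argument; it is essentially Łochowski's representation of $UTV^r$ via the two-sided Skorokhod map, i.e.\ the content of the cited papers. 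So your proposal is a legitimate independent route to the constants, conditional on that representation, which is precisely the nontrivial ingredient the present paper chooses to cite rather than reprove.
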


In the next section we will establish a connection between truncated variation and taut strings. Also, using Theorem~\ref{th:TV_asympt}, we will find the value of $R_r(\rho^{+}_\mu)$.

%%%%%%%%%%%%%%%%%%%%%%%%%%%%%%%%%%%%%%%%%%%%%%%%%%%%%%%%%%%%%%
\section{Application of truncated variation}
%%%%%%%%%%%%%%%%%%%%%%%%%%%%%%%%%%%%%%%%%%%%%%%%%%%%%%%%%%%%%%
\label{sec:asymp}

Our first asymptotic result consists in the computation of the functional $R_r$ 
for two fundamental families of functions:\,
$\rho_\mu^+$  defined in \eqref{eq:rhomu} and $\kappa_\mu(u):=|u-\mu|$.

\begin{theorem}
	\label{th:lambda_shift_conv}
	Let $r> 0$, $\mu\in \mathbb{R}$. Then
	\begin{equation} \label{limkappa}
		R_r(\kappa_\mu) = m_r(\mu),
	\end{equation}
        \begin{equation} \label{limrho}
		R_r(\rho_\mu^+) = q_r(\mu),
	\end{equation}
	where $m_r(\mu)$ and $q_r(\mu)$ are defined in~\eqref{eq:kappa_lim_value} and~\eqref{eq:rho_lim_value}, respectively.
\end{theorem}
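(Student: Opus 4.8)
The plan is to realise each of the two integrals $\int_0^T\kappa_\mu(\eta'_{T,r})\,dt$ and $\int_0^T\rho_\mu^+(\eta'_{T,r})\,dt$ as a boundary-pinned truncated variation of the drifted process $X_t=W_t-\mu t$, and then to invoke Łochowski's Theorem~\ref{th:TV_asympt}. Since $\kappa_\mu$ and $\rho_\mu^+$ are convex, the taut string is a minimizer, so $\int_0^T\kappa_\mu(\eta'_{T,r})\,dt=I^0_W(T,r,\kappa_\mu)$ and likewise for $\rho_\mu^+$. Substituting $g:=h-\mu(\cdot)$ in the defining problem~\eqref{eq:I0_def} turns $\int_0^T|h'-\mu|\,dt$ into $TV^0(g,[0,T])=\int_0^T|g'|\,dt$, turns $\int_0^T(h'-\mu)^+\,dt$ into $UTV^0(g,[0,T])=\int_0^T (g')^+\,dt$, and turns the tube condition $\|h-W\|_T\le r/2$ into $\|g-X\|_T\le r/2$, while the endpoint conditions become $g(0)=X(0)=0$ and $g(T)=X(T)$. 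Hence
\[
  \int_0^T\kappa_\mu(\eta'_{T,r})\,dt=\inf\{TV^0(g,[0,T]):g\in\text{AC}[0,T],\ \|g-X\|_T\le r/2,\ g(0)=X(0),\ g(T)=X(T)\},
\]
and the same with $UTV^0$ in place of $TV^0$ for $\rho_\mu^+$.

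Dropping the two endpoint constraints only decreases the infimum, so Lemma~\ref{lemma:TVr_AC} (equation~\eqref{eq:TVAC}) and~\eqref{eq:UTVAC} immediately give the lower bounds $\int_0^T\kappa_\mu(\eta'_{T,r})\,dt\ge TV^r(X,[0,T])$ and $\int_0^T\rho_\mu^+(\eta'_{T,r})\,dt\ge UTV^r(X,[0,T])$. For the matching upper bounds I would start from a continuous lazy function $g^*$ attaining $TV^r(X,[0,T])$ in~\eqref{eq:TVAC} (it exists since $X$ is continuous), chosen with $g^*(0)=X(0)$, and note $|g^*(T)-X(T)|\le r/2$. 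Fixing a window length $L=L_T$ with $L_T\to\infty$ and $L_T=o(T)$, one replaces $g^*$ on $[T-L_T,T]$ by an admissible AC path steering $g^*(T-L_T)$ to $X(T)$ inside the tube, at an extra total-variation cost at most $TV^r(X,[T-L_T,T])+O(r)$; the same works verbatim for $UTV$. Dividing by $T$ and letting $T\to\infty$, Theorem~\ref{th:TV_asympt} gives $\tfrac1T TV^r(X,[0,T])\asconv m_r(\mu)$ and $\tfrac1T UTV^r(X,[0,T])\asconv q_r(\mu)$, while the boundary term satisfies $\tfrac1T TV^r(X,[T-L_T,T])\asconv 0$ by stationarity of the increments of $X$, the linear growth $\E\,TV^r(X,[0,L])=O(L)$ together with a higher-moment bound, and Borel--Cantelli along the relevant times. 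Sandwiching then yields $R_r(\kappa_\mu)=m_r(\mu)$ and $R_r(\rho_\mu^+)=q_r(\mu)$.

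The main obstacle is the terminal modification. One must (a) carry out the steering so that the modified path provably stays within the strip of half-width $r/2$ around $X$ — there is genuinely little room when the lazy function ends exactly on the tube boundary, so the interpolation has to be arranged with care (for instance through an intermediate step, or on a slightly shrunk tube, using continuity of $X$ near $T$); and (b) show that $\tfrac1T TV^r(X,[T-L_T,T])\to 0$ \emph{almost surely} and not merely in probability, which requires a quantitative tail estimate for truncated variation over a growing window rather than just the distributional identity $TV^r(X,[T-L_T,T])\stackrel{d}{=}TV^r(X,[0,L_T])$.
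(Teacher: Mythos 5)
Your lower bound is exactly the paper's: substitute $g=h-\mu\,\cdot$, drop the endpoint constraints, and apply Lemma~\ref{lemma:TVr_AC} (resp.\ \eqref{eq:UTVAC}) together with Theorem~\ref{th:TV_asympt}. The sandwich structure of the upper bound is also the right one, but the two obstacles you name at the end are genuine gaps, not loose ends. First, the infimum in \eqref{eq:TVAC} over AC functions need not be attained (the lazy function attaining \eqref{eq:TV_opt} is only continuous), so "a lazy function $g^*$ attaining \eqref{eq:TVAC}, chosen with $g^*(0)=X(0)$" is not available; you must work with a $\delta$-near-minimizer, and you must also impose the left endpoint condition, which you gloss over. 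More seriously, steering the path to $X(T)$ while staying \emph{inside the same tube} of half-width $r/2$ can fail outright when the near-minimizer sits on the tube boundary; "continuity of $X$ near $T$" does not by itself yield a connector with controlled $\int|g'|$. Second, you need $\tfrac1T TV^r(X,[T-L_T,T])\to 0$ \emph{almost surely} over a window of growing length $L_T$; the distributional identity plus $\E\,TV^r(X,[0,L])=O(L)$ gives only convergence in probability, and the higher-moment/Borel--Cantelli estimate you allude to is precisely the quantitative input you have not supplied.

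The paper closes both gaps simultaneously with a different device, which is essentially the "slightly shrunk tube" you mention in passing but do not develop. It modifies the near-minimizer only on the \emph{fixed} windows $[0,1]$ and $[T-1,T]$, allowing the modified path to live in a tube of width $r+\delta$: the extra $\delta/2$ of room makes a connector with bounded derivative trivial to build from a smooth approximation of $W$. Comparing with the taut string $\eta_{T,r+\delta}$ gives $R_{r+\delta}(\kappa_\mu)\le m_r(\mu)$; the boundary contribution is $O(1)/T$ with an $O(1)$ that is random and $T$-dependent but identically distributed, so convergence in probability of that term suffices because the other side converges a.s.\ to a constant. Letting $\delta\searrow 0$ and using continuity of $r\mapsto m_r(\mu)$ finishes the argument, with no tail estimate for $TV^r$ over growing windows needed. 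Your direct $UTV$ route to \eqref{limrho} is legitimate (it uses the $UTV$ half of Theorem~\ref{th:TV_asympt}), whereas the paper deduces \eqref{limrho} from \eqref{limkappa} via $\rho_\mu^+=\tfrac12(\kappa_\mu+\chi-\mu)$ and $R_r(\chi)=0$, avoiding a second construction.
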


\begin{proof}
	First of all, notice that the limits in the definitions of $R_r(\kappa_\mu)$ and $ R_r(\rho_\mu^+)$, exist by Schertzer's theorem~\cite[Theorem 1.2]{Schertzer2018}, since 
 the functions $\kappa_\mu$ and $\rho_\mu^+$ satisfy condition~\eqref{eq:power_bound}.
	
	Consider a Wiener process with a drift $\widetilde{W}_t:=W_t-\mu t$. According to Theorem~\ref{th:TV_asympt}
	\begin{equation} \label{LM}
		\lim_{T\to\infty} \frac{1}{T} \, TV^{r}(\widetilde{W},T) \aseq m_r(\mu).
	\end{equation}
	
By applying Lemma~\ref{lemma:TVr_AC} and making the variable change $g(t)=h(t)-\mu t$, 
we obtain
	\begin{eqnarray*}
		TV^{r}(\widetilde{W},T) &=& \inf\left\{ \int_0^T |g'(t)| dt \ ; \ g\in \text{AC}[0,T], \|g-\widetilde{W}\|_T\le \frac r2  \right\}
		\\
		&=& \inf\left\{ \int_0^T |h'(t)-\mu| dt \ ; \  h\in \text{AC}[0,T], \|h-W\|_T \le \frac r2  \right\}.
	\end{eqnarray*}
	Therefore, for the taut string $\eta_{T,r}$ we have
	\begin{equation*}
   	\int_{0}^{T} \kappa_\mu (\eta_{T,r}'(t)) dt 
         =  \int_{0}^{T} |\eta_{T,r}'(t)-\mu| dt 
         \geq  TV^{r}(\widetilde{W},T).
	\end{equation*}
	From \eqref{LM}, it follows now that $R_r(\kappa_\mu)\ge m_r(\mu)$.
	
In order to get the opposite inequality, let us fix a small number $\delta>0$ 
and find a function $h_T\in \text{AC}[0,T]$, such that
	$\|h_T-W\|_T \le r/2$ and
	\begin{equation*}
	     \int_0^{T} |h_T'(t)-\mu| \, dt \le TV^r(\widetilde{W},T) +\delta.
	\end{equation*}
	Next, we construct an absolutely continuous "pseudostring" $\zeta_{T,r}$ satisfying
    the right border conditions	$\zeta_{T,r}(0)=0$, $\zeta_{T,r}(T)=W_T$
	and such that $\zeta_{T,r}=h_T$ on $[1,T-1]$. The connection parts must be sufficiently smooth,
 satisfy border conditions $\zeta_{T,r}(1)=h_T(1)$, $\zeta_{T,r}(T-1)=h_T(T-1)$,
 %%be differentiable so that
%% \begin{equation*}
%%	  \max_{t\in[0,1]} |\zeta'_{T,r}(t)|<\infty; 
%%       \qquad  
%%       \max_{t\in[T-1,T]} |\zeta'_{T,r}(t)|<\infty;
%%\end{equation*}
and
\begin{equation*}
	   \max \left\{ \|\zeta_{T,r}- W\|_{[0,1]};  
          \|\zeta_{T,r}- W\|_{[T-1,T]} \right\} \le \tfrac{r+\delta}{2}.
\end{equation*}
Such construction provides 
$ \|\zeta_{T,r}- W\|_{T}\le \frac{r+\delta}{2}$ and
$\zeta_{T,r}\in \text{AC}[0,T]$.
	
Let us construct $\zeta_{T,r}$ on $[0,1]$.  
Let
\[
   D := \left\{ w\in C^1[0,1]: w(0)=0, w(1)=W_1, \|w-W\|_{[0,1]} \le \delta/2   \right\}
\]
and
\[
   M := \inf_{w\in D} \max_{0\le t \le 1} |w'(t)|.
\]
Then $D$ is a.s. non-empty, hence, $M<\infty$ a.s.

Let us choose a (random) function $w\in D$ such that
\begin{equation} \label{eq:M1}
     \max_{0\le t\le 1} |w'(t)| \le M+1.
\end{equation}
Since  $w\in D$, we have $w(0)=0, w(1)=W_1$ and $\|w-W\|_{[0,1]}\le \tfrac\delta 2$. Let
\[	%% \label{eq:zeta_0_1_repr}
	    \zeta_{T,r}(t) := w(t) +  (h_T(1)-W_1)t, \qquad t\in[0,1].
\]
Since $|h_T(1)-W_1|\le \frac r2$, it is easy to check that this function has all required properties.
In particular, using \eqref{eq:M1}, $|h_T(1)-W_1|\le \frac r2$, and the definition of $\kappa_\mu$, one has
\[
   \frac{1}{T}\, \int_0^1 \kappa_\mu( \zeta'_{T,r}(t)) \, dt 
   \le \frac{M+1+r/2+\mu}{T} \to 0,
   \quad \textrm{as } T\to \infty,
\]
almost surely, hence, in probability.

By stationarity of the increments of a Wiener process, the construction on $[T-1,T]$ is completely similar (instead of $W$, one should approximate another Wiener process, namely,
$W_T(s):= W_{T-s}-W_T$) and the result has the same properties, in particular,
\[
   \frac{1}{T}\, \int_{T-1}^T \kappa_\mu( \zeta'_{T,r}(t)) \, dt 
   \to 0,
   \quad \textrm{as } T\to \infty,
\]
in probability (yet we do not claim the a.s. convergence).

\medskip
	
	A "pseudostring" beeing constructed, let us start our estimations. By optimality property of the true taut string we have
\[
 %%\label{eq:pseudo_ineq}
		\begin{aligned}
			\int_{0}^{T} \kappa_\mu (\eta'_{T,r+\delta}(t)) \, dt 
            &\le  \int_{0}^{T} \kappa_\mu (\zeta_{T,r}'(t)) \, dt
			\\
			&= \int_{[0,1]\cup[T-1,T]} |\zeta'_{T,r}(t)-\mu| \, dt 
               +  \int_{1}^{T-1} |h_T'(t)-\mu| \, dt
			\\
			&\le \int_{[0,1]\cup[T-1,T]} |\zeta'_{T,r}(t)-\mu| \, dt 
               +  \int_{0}^{T} |h_T'(t)-\mu| \, dt
			\\
			&\le \int_{[0,1]\cup[T-1,T]} |\zeta_{T,r}'(t)-\mu| \,dt 
               +  TV^r(\widetilde{W},T) +\delta.
		\end{aligned}
\]
Divide by $T$ and write the obtained inequality as
\begin{equation*}
	    \frac{1}{T}  \int_{0}^{T} \kappa_\mu (\eta'_{T,r+\delta}(t)) \, dt
        - \frac{1}{T}  TV^r(\widetilde{W},T) - \frac{\delta}{T}
	  \le  \frac{1}{T}  \int_{[0,1]\cup[T-1,T]} |\zeta'_{T,r}(t)-\mu| \, dt.
\end{equation*}
	
	As $T\to\infty$, the left hand side  converges a.s. to the constant
    $R_{r+\delta}(\kappa_\mu)-m_r(\mu)$, while the right hand side converges to
    zero in probability. We infer that
	\begin{equation*}
   	R_{r+\delta}(\kappa_\mu) \le m_r(\mu).
	\end{equation*}
	By switching notation from $r+\delta$ to $r$, we have
	\begin{equation*}
  	R_{r}(\kappa_\mu) \le m_{r-\delta}(\mu).
	\end{equation*}
	Finally, letting $\delta\searrow 0$ and using continuity of the expression $m_{r}(\mu)$ with respect to $r$, we arrive at the required estimate $R_r(\kappa_\mu)\le m_r(\mu)$.
	Therefore, \eqref{limkappa} is proved.
	
	In order to derive \eqref{limrho}, let us consider identity function $\chi(t):=t$.
	By the properties of the Wiener process, we have
	\begin{eqnarray} \nonumber
	R_r(\chi)&=& \lim_{T\to \infty} \frac{1}{T}\int_{0}^{T} \eta'_{T,r}(t)dt
	\\  \label{eq:chi}
    &=&  \lim_{T\to \infty} \frac{ \eta_{T,r}(T)- \eta_{T,r}(0)}{T}
	=  \lim_{T\to \infty} \frac{W_T}{T}=0.
	\end{eqnarray}
	Therefore,
	\begin{equation*}
	R_r(\rho^+_\mu) = R_r\left( \frac 12(\kappa_\mu+\chi-\mu)\right) =  \frac 12( R_r(\kappa_\mu) -\mu)
	= \frac 12 (m_r(\mu)-\mu)=q_r(\mu).
	\end{equation*}
\end{proof}

\begin{rem} For the function $\rho^-_{\mu}(u):= (u-\mu)^-$
	we have
	\begin{equation*}
	\begin{aligned}
	R_r(\rho^-_\mu)&=   R_r\left( \frac 12(\kappa_\mu-(\chi-\mu) )\right) =  \frac 12( R_r(\kappa_\mu) +\mu)
	=  \frac 12 \left(m_r(\mu)+\mu\right) 
   \\
	&=  q_r(-\mu).
		\end{aligned}
	\end{equation*}
\end{rem}

%%%%%%%%%%%%%%%%%%%%%%%%%%%%%%%%%%%%%
\section{Asymptotic distribution of taut string's derivative}
%%%%%%%%%%%%%%%%%%%%%%%%%%%%%%%%%%%%
\label{sec:asympt_meas}

Consider normalized sojourn measures $\nu_{T,r}$ for the derivative of the taut string defined by~\eqref{eq:nu_def}. The following result is true.

\begin{theorem}  \label{th:taut_string_weak_meas}
 As $T\rightarrow\infty$, the normalized sojourn measures $\nu_{T,r}$ a.s.
 converge  weakly to the probability measure $\nu_{\infty,r}$ having the density $p_{\infty,r}$
 given in~\eqref{eq:dens_lim_meas}. Moreover, for every $\mu\in\mathbb{R}$ it is true that
	\begin{equation*}
		R_{r}(\rho^{+}_\mu)=
		\int\limits_{\mathbb{R}} \rho^{+}_\mu(u)p_{\infty,r}(u)\, du.
	\end{equation*}
\end{theorem}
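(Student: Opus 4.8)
The plan is to derive the weak convergence $\nu_T\Rightarrow\nu_\infty$ from the one-parameter family of limit relations $R_r(\rho^+_\mu)=q_r(\mu)$, $\mu\in\mathbb{R}$, supplied by Theorem~\ref{th:lambda_shift_conv}, and then to identify the limit measure with the one of density $p_\infty$ by a short computation with hyperbolic functions. First I would fix a countable dense set $D\subset\mathbb{R}$ and, invoking Theorem~\ref{th:lambda_shift_conv}, the Remark following it, and~\eqref{eq:L_2_unknown}, pass to an event $\Omega_0$ of full probability on which simultaneously
\[ \int_{\mathbb{R}}\rho^+_\mu\,d\nu_T\to q_r(\mu), \qquad \int_{\mathbb{R}}\rho^-_\mu\,d\nu_T\to q_r(-\mu) \qquad (\mu\in D), \]
and $\int_{\mathbb{R}}u^2\,\nu_T(du)$ converges; this is a countable intersection of full-probability events, and the rest of the argument is deterministic on $\Omega_0$. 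In particular $\sup_T\int_{\mathbb{R}}u^2\,\nu_T(du)<\infty$, so by Chebyshev's inequality the family $\{\nu_T\}$ is tight and $u\mapsto|u|$ is uniformly integrable with respect to it. (A self-contained alternative uses $\int_{\{u>2M\}}u\,\nu_T(du)\le2\int_{\mathbb{R}}\rho^+_M\,d\nu_T$ together with the exponential decay $q_r(M)\to0$ as $M\to\infty$, and its mirror image with $\rho^-_M$.)

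Next I would argue by subsequences. By tightness, any subsequence of $T\to\infty$ has a sub-subsequence $\nu_{T_k}$ converging weakly to some probability measure $\nu$. Since $\rho^+_\mu$ is continuous with $\rho^+_\mu(u)\le|u|+|\mu|$ and $|u|$ is uniformly integrable, a standard truncation argument upgrades the weak convergence to $\int_{\mathbb{R}}\rho^+_\mu\,d\nu_{T_k}\to\int_{\mathbb{R}}\rho^+_\mu\,d\nu$; for $\mu\in D$ the left-hand side tends to $q_r(\mu)$, and continuity in $\mu$ of both sides then yields $\int_{\mathbb{R}}(u-\mu)^+\,\nu(du)=q_r(\mu)$ for \emph{every} $\mu\in\mathbb{R}$. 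The convex function $\mu\mapsto\int_{\mathbb{R}}(u-\mu)^+\,\nu(du)$ has left derivative $-\nu([\mu,\infty))$ at every point, hence it determines $\nu$. Thus every weak subsequential limit equals one and the same probability measure $\nu_\infty$, so $\nu_T\Rightarrow\nu_\infty$ on $\Omega_0$, that is, a.s. In particular $R_r(\rho^+_\mu)=\int_{\mathbb{R}}\rho^+_\mu\,d\nu_\infty$ for all $\mu$, which is the second assertion of the theorem once $\nu_\infty$ is identified.

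To identify $\nu_\infty$, by the uniqueness just noted it suffices to verify that the candidate density $p_\infty$ of~\eqref{eq:dens_lim_meas} satisfies $\int_{\mathbb{R}}(u-\mu)^+p_\infty(u)\,du=q_r(\mu)$ for all $\mu$. A direct differentiation of $q_r$ from~\eqref{eq:rho_lim_value} gives
\[ q_r''(u)=r\,\frac{ru\coth(ru)-1}{\sh^2(ru)}=p_\infty(u), \qquad u\neq 0, \]
and the same computation shows that $q_r$ extends to a $C^2$ function on $\mathbb{R}$ with $q_r''(0)=r/3=p_\infty(0)$; moreover $p_\infty$ is even and nonnegative (since $x\coth x\ge1$), and $p_\infty(u)\sim4r^2u\,e^{-2ru}$ as $u\to+\infty$, so $\int_{\mathbb{R}}|u|\,p_\infty(u)\,du<\infty$ and $q_r(\mu),q_r'(\mu)\to0$ as $\mu\to+\infty$. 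Setting $Q(\mu):=\int_\mu^\infty(u-\mu)p_\infty(u)\,du$ one obtains $Q'(\mu)=-\int_\mu^\infty p_\infty(u)\,du=-\int_\mu^\infty q_r''(u)\,du=q_r'(\mu)$ and $Q(\mu)\to0=q_r(\mu)$ as $\mu\to+\infty$, whence $Q\equiv q_r$; letting $\mu\to-\infty$ and comparing asymptotics also gives $\int_{\mathbb{R}}p_\infty(u)\,du=1$. Hence $p_\infty$ is a genuine probability density and $\nu_\infty$ is the measure it defines.

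The main obstacle is the passage from pointwise a.s.-convergence of $\int_{\mathbb{R}}\rho^+_\mu\,d\nu_T$ on the countable set $D$ to genuine weak convergence: this needs the uniform integrability control --- obtained from~\eqref{eq:L_2_unknown}, or self-containedly from the exponential decay of $q_r$ --- in order to cope with the unboundedness of the test functions $\rho^+_\mu$. The subsequence/uniqueness argument and the hyperbolic function computation are then routine.
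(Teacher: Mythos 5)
Your proposal is correct, but it reaches the weak convergence by a genuinely different mechanism than the paper. The paper's proof is a direct sandwich: it traps the indicator $\mathds{1}_{u\geq v}$ between the difference quotients $\tfrac1\delta(\rho^+_v-\rho^+_{v+\delta})$ and $\tfrac1\delta(\rho^+_{v-\delta}-\rho^+_v)$, integrates against $\nu_T$, invokes Theorem~\ref{th:lambda_shift_conv} for each fixed $v,\delta$, and lets $\delta\to0$ to get $\nu_T[v,\infty)\to -q_r'(v)$ pointwise a.s.\ (then rational $v$ plus continuity of the limiting tail). This requires no compactness and no uniform integrability: monotonicity of $\rho^+_\mu$ in $\mu$ does all the work. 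You instead extract subsequential weak limits via tightness, use uniform integrability of $|u|$ to pass the unbounded test functions $\rho^+_\mu$ to the limit, and then identify all subsequential limits through the fact that the ``call function'' $\mu\mapsto\int(u-\mu)^+\,\nu(du)$ determines $\nu$ (via its left derivative $-\nu([\mu,\infty))$ --- which is, at bottom, the same difference-quotient observation the paper exploits directly). Your route costs an extra input --- the moment control, which you correctly obtain either from~\eqref{eq:L_2_unknown} or, more self-containedly, from $\int_{\{u>2M\}}u\,\nu_T(du)\le 2\int\rho^+_M\,d\nu_T\to 2q_r(M)\to0$ --- but it buys the additional conclusion that $\int\rho^+_\mu\,d\nu_T\to\int\rho^+_\mu\,d\nu_\infty$ for the unbounded functions $\rho^+_\mu$ themselves, which the paper obtains only afterwards by integration by parts. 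The identification $q_r''=p_\infty$ and the verification $\int(u-\mu)^+p_\infty(u)\,du=q_r(\mu)$ coincide with the paper's computation. No gaps.
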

\begin{proof}
	
For all $v\in\mathbb{R}$, $\delta>0$ we have inequalities
	\begin{equation*}
		\frac{1}{\delta}
		(\rho^+_v(u)- \rho^+_{v+\delta}(u) )
		\leq
		\mathds{1}_{u\geq v}
		\leq
		\frac{1}{\delta}
		( \rho^+_{v-\delta}(u)- \rho^+_{v}(u) ).
	\end{equation*}
By integrating the left hand side of the inequality over the measure $\nu_{T,r}$, 
we obtain

\begin{equation*}
\begin{aligned}
	\frac{1}{\delta}
	\int\limits_{\mathbb{R}} (\rho^+_v(u)-\rho^+_{v+\delta}(u)) \, \nu_{T,r}(du)
	=
	\frac{1}{T\delta}
	\int\limits_{0}^{T} 	( \rho^+_v-\rho^+_{v+\delta})(\eta'_{T,r}(t)) \,dt.
\end{aligned}
\end{equation*}
Let us compute the limit, as $T\rightarrow\infty$, of the right hand side of this equality:

\begin{equation*}
	\lim\limits_{T\rightarrow\infty}
	\frac{1}{\delta}
	\int\limits_{\mathbb{R}}( \rho^+_v(u)-\rho^+_{v+\delta}(u))\, \nu_{T,r}(du)
	\aseq 
	\frac{1}{\delta}
	\big(R_r(\rho^+_v) -R_r(\rho^+_{v+\delta}) \big).
\end{equation*}
Therefore,

\begin{equation*}
	\frac{1}{\delta}
	\big(R_r(\rho^+_v)-R_r(\rho^+_{v+\delta}) \big)
	\asleq
	\liminf\limits_{T\rightarrow\infty} 
    \int\limits_{\mathbb{R}} \mathds{1}_{u\geq v}\, \nu_{T,r}(du)
	=	\liminf\limits_{T\rightarrow\infty} \nu_{T,r}[v,\infty).
\end{equation*}

In the same way, we obtain

\begin{equation*}
	\limsup\limits_{T\rightarrow\infty} \nu_{T,r}[v,\infty)
	\asleq
	\frac{1}{\delta}
	\big(R_r(\rho^+_{v-\delta})-R_r(\rho^+_{v}) \big).
\end{equation*}
By letting $\delta\rightarrow0$, we see that the limiting values of both the upper and
lower bounds coincide with $-\frac{d}{dv}R_r(\rho^+_{v})$. Therefore,
\begin{equation}
	\label{eq:dens_int}
	\lim\limits_{T\rightarrow\infty} \nu_{T,r}[v,\infty)
	\aseq	-\frac{d}{dv}R_r(\rho^+_{v})
	= \int\limits_{v}^\infty \frac{d^2}{du^2}R_r(\rho^+_{u})\, du
	:= \int\limits_{v}^\infty p_{\infty,r}(u)\, du.
\end{equation}
This relation shows convergence 
\begin{equation}
	\label{eq:nu_conv_poinwise}
	\lim\limits_{T\rightarrow\infty}
	\nu_{T,r}[v,\infty)
	\aseq	\nu_{\infty,r}[v,\infty)
\end{equation}
for every fixed $v$. Therefore, with probability one~\eqref{eq:nu_conv_poinwise} holds simultaneously for all rational $v$. Furthermore since the distribution tail $\nu_{\infty,r}[v,\infty)$ is continuous in $v$, the convergence holds a.s. simultaneously for all $v\in\mathbb{R}$, which proves weak convergence.

Formulas~\eqref{limrho} and~\eqref{eq:dens_int} provide the asserted expression for $p_{\infty,r}$. Finally, for every $\mu\in\mathbb{R}$ integration by parts yields

\begin{equation*}
	\int\limits_{\mathbb{R}} \rho_\mu^{+}(u)p_{\infty,r}(u)\, du
	=	- \int\limits_{\mu}^\infty \frac{d}{du}R_r(\rho^+_{u})\, du
	=	R_r(\rho_\mu^{+}).
\end{equation*}
\end{proof}

\begin{cor} Let $\varphi$ be either an almost everywhere continuous non-negative or a convex function. 
Then for every $r>0$
  \begin{equation}
  \label{eq:liminf_geq_J}
     \liminf_{T\to\infty} \frac{1}{T}  \int_0^T \varphi(\eta'_{T,r}(t))\, dt
     \asgeq \int\limits_{\mathbb{R}} \varphi(u)  p_{\infty,r}(u)\, du .   
  \end{equation}
\end{cor}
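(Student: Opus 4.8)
The plan is to reduce the general statement to the already-established identity $R_r(\rho_\mu^+) = \int_{\mathbb R} \rho_\mu^+(u)\,p_\infty(u)\,du$ from Theorem~\ref{th:taut_string_weak_meas} by approximating $\varphi$ from below by simple functions built out of the family $\rho_\mu^+$, $\rho_\mu^-$, and affine functions, and then invoking Fatou-type arguments together with the a.s. weak convergence $\nu_T \Rightarrow \nu_\infty$. First I would handle the convex case. Any convex function $\varphi$ is the pointwise supremum of its affine minorants $\ell_i(u) = a_i u + b_i$; moreover each affine function can be written, using $\chi(t)=t$ and a constant, as a linear combination of $\rho^+$-type and $\rho^-$-type functions (or more simply one just needs $R_r(\ell) = a\,R_r(\chi) + b = b$ since $R_r(\chi)=0$ by~\eqref{eq:chi}, and $\int \ell\,p_\infty = b$ because $p_\infty$ is a symmetric probability density with zero mean — which itself follows from $R_r(\chi)=0$ and the formula for $p_\infty$). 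For a finite family $\ell_1,\dots,\ell_N$ of affine minorants, $\max_i \ell_i(u) \le \varphi(u)$, so
\[
  \liminf_{T\to\infty} \frac1T \int_0^T \varphi(\eta'_{T,r}(t))\,dt
  \asgeq \liminf_{T\to\infty} \frac1T \int_0^T \max_i \ell_i(\eta'_{T,r}(t))\,dt
  = \liminf_{T\to\infty} \int_{\mathbb R} \bigl(\max_i \ell_i\bigr)(u)\,\nu_T(du).
\]
Since $\max_i \ell_i$ is continuous and bounded below by a fixed affine function, and the $\nu_T$ converge weakly a.s. to $\nu_\infty$, the right-hand side is a.s. at least $\int_{\mathbb R} (\max_i \ell_i)(u)\,p_\infty(u)\,du$; here one must take a little care because $\max_i\ell_i$ is unbounded, so one truncates it at a large level $M$, applies weak convergence to the bounded continuous function $(\max_i\ell_i)\wedge M$, and then lets $M\to\infty$ by monotone convergence on the limit side (the pre-limit side only increases when the truncation is removed). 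Then letting the family $\{\ell_i\}$ exhaust all affine minorants and using $\sup_i \ell_i = \varphi$ together with monotone convergence gives $\liminf \ge \int_{\mathbb R}\varphi\,p_\infty\,du$, which is~\eqref{eq:liminf_geq_J}.

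For the almost-everywhere continuous non-negative case the idea is the same but cleaner, since now there is no integrability issue on the negative side. I would approximate $\varphi$ from below by an increasing sequence of bounded continuous functions $\varphi_n$ with $\varphi_n \uparrow \varphi$ almost everywhere; since $p_\infty$ is absolutely continuous, "almost everywhere" with respect to Lebesgue measure is the same as $\nu_\infty$-a.e., so $\int \varphi_n\,p_\infty \uparrow \int \varphi\,p_\infty$ by monotone convergence. For each fixed $n$, $\varphi_n$ bounded continuous and $\nu_T \Rightarrow \nu_\infty$ a.s. give $\lim_T \int \varphi_n\,d\nu_T \aseq \int \varphi_n\,p_\infty\,du$; combined with $\varphi_n \le \varphi$ pointwise this yields $\liminf_T \frac1T\int_0^T \varphi(\eta'_{T,r}(t))\,dt \asgeq \int \varphi_n\,p_\infty\,du$ for every $n$, and letting $n\to\infty$ finishes the argument. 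One subtlety: to build $\varphi_n$ continuous with $\varphi_n\uparrow\varphi$ a.e.\ from an a.e.-continuous $\varphi$, one can use e.g.\ $\varphi_n(u) := \min\bigl(n,\ \inf_{|v-u|\le 1/n}\varphi(v)\bigr)^+$ or a standard mollification argument; I would relegate this routine real-analysis construction to a lemma or simply cite lower semicontinuous approximation.

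The main obstacle I anticipate is the interchange of limits in the convex, unbounded case: one cannot directly apply weak convergence to an unbounded test function, and one must be careful that the truncation argument only sacrifices mass on the "good" (large $\varphi$) side, so that the inequality direction is preserved. Concretely, the delicate point is to argue
\[
  \liminf_{T\to\infty} \int_{\mathbb R} g\,d\nu_T \;\asgeq\; \int_{\mathbb R} g\,p_\infty\,du
\]
for $g$ continuous and bounded below but possibly unbounded above; this follows from $\int g\,d\nu_T \ge \int (g\wedge M)\,d\nu_T$, weak convergence applied to the bounded continuous $g\wedge M$, and then $M\to\infty$ via monotone convergence — but writing this cleanly, uniformly in the a.s.\ event on which $\nu_T\Rightarrow\nu_\infty$, is where the care is needed. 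Everything else is soft: linearity of $R_r$, the representation of affine functions, and monotone convergence.
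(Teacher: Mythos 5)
Your proposal is correct, and it rests on the same pillar as the paper's proof --- the a.s.\ weak convergence $\nu_T\Rightarrow\nu_\infty$ of Theorem~\ref{th:taut_string_weak_meas} together with $R_r(\chi)=0$ from \eqref{eq:chi} and a monotone truncation to pass from bounded to unbounded test functions --- but the reductions you use are genuinely different. For the convex case the paper simply writes $\varphi=\ell+\psi$ with $\ell$ affine (a supporting line) and $\psi\ge 0$ convex, hence continuous, and feeds $\psi$ into the non-negative case; you instead represent $\varphi$ as the supremum of countably many affine minorants and work with truncated finite maxima, which is heavier but equally valid (one should note explicitly that affine functions are $\nu_\infty$-integrable, which is clear from the exponential decay of $p_\infty$, so the monotone convergence step with a fixed affine lower bound is justified). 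For the non-negative a.e.-continuous case the paper truncates $\varphi$ at level $M$ and applies the mapping/portmanteau theorem for bounded, $\nu_\infty$-a.e.\ continuous integrands (legitimate because $\nu_\infty$ has a density, so the discontinuity set is $\nu_\infty$-null); you instead approximate from below by bounded continuous functions, which avoids that version of the portmanteau theorem at the cost of a lower semicontinuous envelope argument. Your concrete formula $\varphi_n(u)=\min\bigl(n,\inf_{|v-u|\le 1/n}\varphi(v)\bigr)^+$ need not be continuous for an arbitrary a.e.-continuous $\varphi$, but your fallback is sound: the lsc envelope $\varphi_*$ agrees with $\varphi$ at every continuity point, hence Lebesgue-a.e.\ and thus $\nu_\infty$-a.e., and a non-negative lsc function is an increasing limit of continuous bounded functions, which is all the argument needs. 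Your observation that everything takes place on the single a.s.\ event where $\nu_T\Rightarrow\nu_\infty$, so no further null-set bookkeeping is required, is also correct. Net effect: the paper's proof is shorter because it exploits the a.e.-continuity form of weak convergence directly on $\varphi\wedge M$; yours is slightly longer but uses only continuous test functions and standard convex-analysis envelopes.
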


\begin{proof}
 %% \cite[Лемма 2.2]{Vaart2007}), 
If  $\varphi$ is almost everywhere continuous and {\it bounded}, by the well known property
of weak convergence,
\begin{eqnarray*}    
  \lim_{T\to\infty} \frac{1}{T}  \int_0^T \varphi(\eta'_{T,r}(t))\, dt
   =
    \lim_{T\to\infty}   \int\limits_{\mathbb{R}} \varphi(u)\, \nu_{T,r}(du)
   \aseq \int\limits_{\mathbb{R}} \varphi(u) p_{\infty,r}(u)\, du .
\end{eqnarray*} 
Dropping the boundedness assumption, consider for $M>0$ the truncated function
$\varphi_M(u):=\min\{\varphi(u),M\}$. Since $\varphi$ is assumed to be non-negative, $\varphi_M$
is bounded and we may apply the previous chain to it.
Therefore,
\[
   \liminf_{T\to\infty} \frac{1}{T}  \int_0^T \varphi(\eta'_{T,r}(t))\, dt
   \geq
   \lim_{T\to\infty} \frac{1}{T}  \int_0^T \varphi_M(\eta'_{T,r}(t))\, dt
   =  \int\limits_{\mathbb{R}} \varphi_M(u) p_{\infty,r}(u)\, du. 
\]
Furthermore, since 
\[
 \lim_{M\to\infty}  \int\limits_{\mathbb{R}} \varphi_M(u) p_{\infty,r}(u)\, du 
 =  \int\limits_{\mathbb{R}} \varphi(u) p_{\infty,r}(u)\, du,
\]
we obtain \eqref{eq:liminf_geq_J}

On the other hand, a convex function can be represented as a sum of a linear function and
a non-negative continuous function. For linear functions we have a.s. convergence, 
cf.~\eqref{eq:chi}, thus the assertion follows.
\end{proof}

We stress that this corollary is still true in the case when the limiting integral is infinite.

%%%%%%%%%%%%%%%%%%%%%%%%%%%%%%%%%%%%%%%%%%
\section{Weak law of large numbers}
%%%%%%%%%%%%%%%%%%%%%%%%%%%%%%%%%%%%%%%%%%
\label{sec:pconv}
In this section, we prove the weak law of large numbers for the energy of the taut string accompanying Wiener process. It was already stated  in~\eqref{main_pconv} as a part of Theorem~\ref{th:main}. For reading convenience, we repeat it here as a separate theorem.

\begin{theorem} \label{th:weakLLN}
  Let $\varphi$ be a convex non-negative function satisfying \eqref{eq:lambda} or \eqref{eq:rinfty}. Let $r>0$ and let $\eta_{T,r}$ be the taut string accompanying Wiener process in the strip of width $r$ on $[0,T]$; then
\begin{equation*}
		\frac{1}{T}
	  \int\limits_{0}^T \varphi(\eta'_{T,r}(t))\, dt 
		\pconv
		\int\limits_{\mathbb R}
		\varphi(u) p_{\infty,r}(u)\, du,
    \qquad  \textrm{as } T\rightarrow\infty.
	\end{equation*}
\end{theorem}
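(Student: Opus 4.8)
\emph{Plan.} The lower bound $\liminf_{T}\frac1T\int_0^T\varphi(\eta'_{T,r})\,dt \asgeq \int_{\mathbb R}\varphi p_\infty$ is already contained in~\eqref{eq:liminf_geq_J}, so it suffices to prove the matching upper bound in probability: for every $\varepsilon>0$, $\prob\big(\tfrac1T\int_0^T\varphi(\eta'_{T,r})\,dt>\int\varphi p_\infty+\varepsilon\big)\to0$. If $\int\varphi p_\infty=\infty$ this merely says $\tfrac1T\int_0^T\varphi(\eta'_{T,r})\,dt\to+\infty$ in probability, which follows from the a.s.\ lower bound; so I may assume $\int\varphi p_\infty<\infty$.

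\emph{Step 1: truncation.} For $a>0$ let $\varphi_a$ be the convex function agreeing with $\varphi$ on $[-a,a]$ and extended beyond $\pm a$ by the tangent lines of $\varphi$. Then $\varphi_a\le\varphi$, the remainder $g_a:=\varphi-\varphi_a\ge0$ is again convex, vanishes on $[-a,a]$, and satisfies $g_a(u)=\int_{(a,\infty)}\rho^+_s(u)\,\mu_\varphi(ds)+\int_{(-\infty,-a)}\rho^-_s(u)\,\mu_\varphi(ds)$, where $\mu_\varphi$ is the second‑derivative measure of $\varphi$. Being piecewise linear, $\varphi_a$ is an affine function plus a finite nonnegative combination of the $\rho^\pm_\mu$, hence by Theorem~\ref{th:lambda_shift_conv}, the Remark following it, \eqref{eq:chi} and linearity, $\tfrac1T\int_0^T\varphi_a(\eta'_{T,r})\,dt\asconv\int\varphi_a p_\infty\le\int\varphi p_\infty$. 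Writing $\varphi=\varphi_a+g_a$, noting $\int\varphi p_\infty=\int\varphi_a p_\infty+\int g_a p_\infty$ with $\int g_a p_\infty\ge0$, and splitting the event accordingly, the $\varphi_a$‑part has probability $\to0$ for each fixed $a$; it thus remains to show
\[
  \lim_{a\to\infty}\ \limsup_{T\to\infty}\ \prob\!\Big(\tfrac1T\textstyle\int_0^T g_a(\eta'_{T,r}(t))\,dt>\tfrac\varepsilon2\Big)=0.
\]

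\emph{Step 2: the tail term, via truncated variation.} By the representation of $g_a$ and Fubini, $\tfrac1T\int_0^T g_a(\eta'_{T,r})\,dt=\int_{(a,\infty)}\big(\tfrac1T\int_0^T\rho^+_s(\eta'_{T,r})\,dt\big)\mu_\varphi(ds)$ plus the symmetric $\rho^-$‑integral. Since $\eta_{T,r}$ minimizes $\int_0^T\rho^+_s(\cdot')\,dt$, comparing it — just as in the proof of Theorem~\ref{th:lambda_shift_conv} — with a competitor obtained by gluing a near‑optimizer of the free‑endpoint problem $UTV^r(W-st,[0,T])$ (cf.~\eqref{eq:UTVAC}) to \emph{taut} connecting arcs on $[0,1]$ and $[T-1,T]$ gives, for all $s\ge0$,
\[
 \tfrac1T\int_0^T\rho^+_s(\eta'_{T,r})\,dt\ \le\ \tfrac1T\,UTV^r(W-st,[0,T])\ +\ \tfrac1T\big((M^*_1-s)^++(M^*_2-s)^+\big),
\]
where $M^*_1,M^*_2$ bound the (a.s.\ finite) slopes of the connecting arcs and have $T$‑ and $s$‑independent law — here one uses that a taut string inside a fixed‑width tube over a compact interval has finitely many linear pieces a.s. An analogous bound, with $W-st$ replaced by $W+st$, holds for $\rho^-_s$. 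Integrating in $s$ and using $\int_{(a,\infty)}(M^*_i-s)^+\mu_\varphi(ds)\le M^*_i|\varphi'(M^*_i)|<\infty$ a.s., one gets $\tfrac1T\int_0^T g_a(\eta'_{T,r})\,dt\le \int_{(a,\infty)}\tfrac1T UTV^r(W-st,[0,T])\,\mu_\varphi(ds)+(\text{symmetric term})+\tfrac{\mathcal D}{T}$, with $\mathcal D$ a.s.\ finite of $T$‑independent law, so $\mathcal D/T\pconv0$. Markov's inequality now reduces the whole statement to the estimate
\[
  \sup_{T\ge1}\ \E\!\left[\tfrac1T\,UTV^r(W-\mu t,[0,T])\right]\ \le\ C\,q_r(\mu)\qquad\text{for $|\mu|$ large,}
\]
with $q_r$ as in~\eqref{eq:rho_lim_value}: indeed, $\int\varphi p_\infty<\infty$ is equivalent (by Theorem~\ref{th:taut_string_weak_meas} and the Remark after Theorem~\ref{th:lambda_shift_conv}) to $\int_{(0,\infty)}q_r(s)\,\mu_\varphi(ds)+\int_{(-\infty,0)}q_r(-s)\,\mu_\varphi(ds)<\infty$, so its tails beyond $a$ vanish as $a\to\infty$; combined with the displayed estimate this makes $\sup_{T\ge1}\E$ of the $UTV^r$‑integrals tend to $0$ as $a\to\infty$, and Markov's inequality then delivers the required limit.

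\emph{Main obstacle.} The hard part is the uniform‑in‑$T$ bound $\sup_{T\ge1}\E[\tfrac1T UTV^r(W-\mu t,[0,T])]\le Cq_r(\mu)$ for large $|\mu|$: Theorem~\ref{th:TV_asympt} gives only the asymptotic value $q_r(\mu)$ for each fixed $\mu$, whereas one needs control of the rate uniformly for $T\ge1$ together with the correct exponential decay in the drift $\mu$ — and the crude subadditivity $UTV^r([0,n])\le\sum_i UTV^r([i-1,i])+(n-1)r$ is useless here, the joining term $(n-1)r$ failing to decay in $\mu$. This forces one back to Łochowski's explicit analysis of $UTV^r$ for a Wiener process with drift, viewing $UTV^r(W-\mu t,[0,T])$ as a sum of excesses over the rare up‑excursions of $W-\mu t$ above level $r$, whose intensity, mean excess and boundary contribution over $[0,1]$ are all of order $q_r(\mu)$. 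The remaining ingredients — convexity and the integral representation of $g_a$, the a.s.\ finiteness and $T$‑independence of $M^*_1,M^*_2,\mathcal D$, and the Fubini/Markov bookkeeping — are routine.
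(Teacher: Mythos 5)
Your overall architecture is sound and close in spirit to the paper's: decompose $\varphi$ into a mixture of the functions $\rho^{\pm}_\mu$, compare the taut string with a pseudostring glued from a near-optimizer of the $UTV^r$ problem for $W_t-\mu t$, and control the tail of the mixture in expectation. But the proposal has a genuine gap exactly where you placed it: the uniform-in-$T$ bound
\[
  \sup_{T\ge 1}\ \E\Big[\tfrac1T\,UTV^r(W-\mu t,[0,T])\Big]\ \le\ C\,q_r(\mu)
\]
is not proved; you only observe that Theorem~\ref{th:TV_asympt} gives the limit for fixed $\mu$ and that naive subadditive splitting fails, and you then defer to an unperformed re-examination of {\L}ochowski's excursion analysis. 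Since the entire Step~2 (and hence the upper bound in probability) hinges on this estimate, the proof is not complete. The paper closes precisely this gap with a short observation you missed: $UTV^{r}(\widetilde W_\mu,[\cdot,\cdot])$ is \emph{superadditive} over abutting intervals (this is noted in \cite{Lochowski2013}), and its increments over $[(i-1)S,iS]$ are i.i.d.; hence $\tfrac1{nS}UTV^r(\widetilde W_\mu,[0,nS])\ge \tfrac1{nS}\sum_i UTV^r(\widetilde W_\mu,[(i-1)S,iS])\to \tfrac1S\,\E\,UTV^r(\widetilde W_\mu,[0,S])$, so letting $n\to\infty$ gives $\tfrac1S\,\E\,UTV^r(\widetilde W_\mu,[0,S])\le q_r(\mu)$ for \emph{every} $S$ (in fact the paper records the a.s.\ version $\tfrac1T UTV^r\asleq q_r(\mu)$, citing \cite[Lemma 3.6.II]{Daley2005}). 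This yields your "main obstacle" with $C=1$ and no further excursion analysis.

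Two secondary points. First, in Step~1 your justification that $\tfrac1T\int_0^T\varphi_a(\eta'_{T,r})\,dt\asconv\int\varphi_a p_\infty$ is wrong as written: $\varphi_a$ agrees with $\varphi$ on $[-a,a]$ and is therefore \emph{not} piecewise linear, so it is not a finite combination of the $\rho^\pm_\mu$; it is a continuous mixture $\varphi_a=\ell+\int_{[-a,a]}\rho^{\pm}_s\,\mu_\varphi(ds)$ with $\mu_\varphi([-a,a])<\infty$, and you need to interchange the $T$-limit with this integral (e.g.\ by dominated convergence using $\rho^+_s\le\rho^+_0$ for $s\ge0$ and the a.s.\ convergence of $\tfrac1T\int_0^T\rho_0^+(\eta'_{T,r})\,dt$); alternatively, invoke the a.s.\ weak convergence of the sojourn measures (Theorem~\ref{th:taut_string_weak_meas}) for the bounded part. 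This is fixable but is not "routine bookkeeping" as stated. Second, your pseudostring comparison is stated as if the glued competitor stays in the width-$r$ tube; in the paper the connecting arcs force a slightly wider tube $r+\delta$, so the comparison bounds $\int\rho^+_s((\eta_{T,r+\delta})')$ rather than $\int\rho^+_s((\eta_{T,r})')$, and one must afterwards let $\delta\downarrow0$ using continuity of the limit in $r$. If you insist on width-$r$ connecting arcs with a uniform slope bound $M^*_i$ of $T$- and $s$-independent law, that claim needs a proof; the paper avoids it entirely by the $\delta$-widening.
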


\begin{proof}
If \eqref{eq:rinfty} holds, i.e. the limiting integral is infinite, then the result follows from~\eqref{eq:liminf_geq_J}. In the sequel, we assume that the limiting integral is finite.

Without loss of generality one may assume that $\varphi(u)=0$ for $u\le 0$. Indeed, any convex function $\varphi$ can be represented as a sum of a linear function $\ell(u):=\varphi(0)+cu$ for $c\in [\varphi_-'(0), \varphi_+'(0)]$ and two convex functions $\varphi_1$ and $\varphi_2$ such that $\varphi_1(u)=0$ for $u\leq 0$ and $\varphi_2(u)=0$ for $u\geq 0$. By ~\eqref{eq:chi}, we have $R_r(\ell)\aseq \varphi(0)$. Therefore, it is enough to prove the theorem for the functions $\varphi_1$ and $\varphi_2$ separately.

From now on, we suppose that $\varphi(u)=0$ for $u\le 0$. In this case, the function $\varphi$ may be represented as
\begin{equation} \label{eq:expansion}
	\varphi(u)=
	\int\limits_{0}^\infty   \rho^{+}_\mu(u)\, \varphi''(d\mu),
\end{equation}
where $\varphi''$ is a locally finite measure. %% in the sense of distributions.
 \medskip

Let $\theta: [0,\infty) \mapsto (0,\infty)$   be a Borel-measurable map such that the constant
$\Theta$ defined as
\[
   \Theta:= \int_0^\infty \theta(\mu)\varphi''(d\mu)
\]
is finite.

As in the proof of Theorem~\ref{th:lambda_shift_conv}, for each $\mu\ge 0$, using \eqref{eq:UTVAC}, we find an absolutely continuous function $h_{T,r,\mu}$ such that
$||h_{T,r,\mu}-W||_T\le \tfrac{r}{2}$ and
	\begin{equation*}
		\int\limits_0^T \rho^{+}_\mu(h'_{T,r,\mu}(t))\,dt
		\leq
		UTV^r(\widetilde{W}_\mu,[0,T]) + \theta(\mu),
	\end{equation*}
where $\widetilde{W}_\mu(t)=W_t-\mu t$ is a Wiener process with a drift.

For fixed $T>2,\delta>0$ we construct a  "pseudostring" $\zeta_{T,r,\mu}$  in the strip of width $r+\delta$ . As in the proof of Theorem~\ref{th:lambda_shift_conv}, let $\zeta_{T,r,\mu}(t)=h_{T,r,\mu}(t)$ on $[1,T-1]$ and for $t\in[0,1]$
\[	\label{eq:zeta_0_1_repr2}
	\zeta_{T,r,\mu}(t) := w(t) +  (h_{T,r,\mu}(1)-W_1)t
\]
where $w$ is the same continuously differentiable function such that $w(0)=0, w(1)=W_1$,
$\|w-W\|_{[0,1]}\le \tfrac\delta 2$ and \eqref{eq:M1} holds. Hence
\begin{equation} \label{eq:maxder}
  \max_{t\in [0,1]} |\zeta'_{T,r,\mu}(t)| \le  \max_{t\in [0,1]} |w'(t)| +  |h_{T,r,\mu}(1)-W_1|  
  \le M+1+  \frac{r}{2}:= \widetilde{M}. 
\end{equation}
This estimate is random but uniform in $\mu,T$.

By stationarity of the increments of a Wiener process, the construction on $[T-1,T]$ is completely similar.

From optimality of the taut string $\eta_{T,r+\delta}$, it follows that
\begin{eqnarray*} 
	\int\limits_0^T
	\rho^{+}_\mu(\eta_{T,r+\delta}'(t))\, dt
 &\le&
 \int\limits_0^T
	\rho^{+}_\mu(\zeta_{T,r,\mu}'(t))\, dt
  \\  
  &=&
		\int\limits_{[0,1]\cup[T-1,T]}
		\rho^{+}_\mu(\zeta_{T,r,\mu}'(t))\, dt
		+
		\int\limits_1^{T-1}
		\rho^{+}_\mu(h_{T,r,\mu}'(t))\, dt
\end{eqnarray*}
for any $\mu$. Using representation~\eqref{eq:expansion}, we get
\begin{equation*}
	\frac{1}{T}
	\int\limits_0^T \varphi(\eta_{T,r+\delta}'(t))\,dt
	\leq
	A^1_{T,r} + B_{T,r}+ A^T_{T,r},
\end{equation*}
where
\begin{equation*}
	\begin{aligned}
		A^1_{T,r}
		&:=
		\frac{1}{T}
        \int\limits_{0}^\infty
		\int\limits_0^1
		\rho^{+}_\mu(\zeta_{T,r,\mu}'(t))\,dt\, \varphi''(d\mu),
		\\
		A^T_{T,r}
		&:=
		\frac{1}{T}\int\limits_{0}^\infty
		\int\limits_{T-1}^T
		\rho^{+}_\mu(\zeta_{T,r,\mu}'(t))\,dt\, \varphi''(d\mu),
		\\
		B_{T,r}
		&:=
			\frac{1}{T}
		\int\limits_{0}^\infty
		\int\limits_1^{T-1}
		\rho^{+}_\mu(h_{T,r,\mu}'(t))\,dt\,\varphi''(d\mu).
\end{aligned}
\end{equation*}
\medskip

Let us estimate the expectation of $B_{T,r}$. Since $\rho^+_\mu$ 
is non-negative, by the definition of $h_{T,r,\mu}$ we have the following inequality

\begin{equation*}
	\frac{1}{T}\int\limits_1^{T-1} \rho^{+}_\mu(h'_{T,r,\mu}(t))\,\,dt
	\leq
	\frac{1}{T}\int\limits_0^T \rho^{+}_\mu(h'_{T,r,\mu}(t))\,\,dt 
	\leq
	\frac{1}{T}  \, UTV^r(\widetilde{W}_\mu,[0,T]) +\frac{\theta(\mu)}{T}.
\end{equation*}

Since $UTV^r(\widetilde{W}_\mu,[0,T])$ is superadditive in $T$ (see~\cite{Lochowski2013}),  it follows from Theorem \ref{th:TV_asympt} and \cite[Lemma 3.6.II]{Daley2005} 
that for every $T>0$
\begin{equation*}
    \frac{1}{T} \, UTV^r(\widetilde{W}_\mu,[0,T])\asleq q_r(\mu). 
\end{equation*}
Fubini's theorem and double integration by parts yield
\begin{equation*}
	\E B_{T,r}
	\leq  \int\limits_{0}^\infty q_r(\mu)\, \varphi''(d\mu) + \frac{\Theta}{T}
	=	\int\limits_{0}^\infty \varphi(u) q_r''(u)\, du     + \frac{\Theta}{T}
	=	\int\limits_{0}^\infty \varphi(u) p_{\infty,r}(u)\, du  +\frac{\Theta}{T}.
\end{equation*} 
\medskip

Let us now estimate the remainder terms $A^1_{T,r}$ and $A^T_{T,r}$.
From~\eqref{eq:maxder}  and the definition of $\rho^{+}_\mu$, it follows that for every $t\in[0,1]$
\[
    0\le \rho^{+}_\mu(\zeta_{T,r,\mu}'(t)) \le 
    \begin{cases}
     \widetilde{M}, & 0\le \mu\le \widetilde{M},
     \\ 
     0,&  \mu>\widetilde{M}. 
    \end{cases}
\]
Therefore,
\[
  0 \le  \int\limits_{0}^\infty \int\limits_0^1
		\rho^{+}_\mu(\zeta_{T,r,\mu}'(t))\,dt\, \varphi''(d\mu)
     \le \widetilde{M} \varphi''[0,\widetilde{M}],
\]
whence
\[
    0 \le A^1_{T,r} \le \frac{ \widetilde{M} \varphi''[0,\widetilde{M}]}{T} \pconv 0, 
    \quad \textrm{ as } T\to\infty.
\]

%%	\label{eq:A_major}

Since $A^1_{T,r}$ and $A^T_{T,r}$ have the same distributions (by stationarity of the increments of a Wiener process), we also obtain $ A^T_{T,r}\pconv 0$. 

Recall that the limiting measure $\nu_{\infty,r}$ depends on $r$ and denote the limiting integral as a function of the strip width:
\begin{equation*}
	J(\lambda) := \int\limits_{0}^\infty \varphi(u) \, \nu_{\infty,\lambda}(du)
                   =  \int\limits_{0}^\infty \varphi(u) \, p_{\infty,\lambda}(u)\, du.
\end{equation*}
Let us show that $J(\cdot)$ is non-increasing; moreover, under assumption \eqref{eq:lambda} $J(\cdot)$ is finite and continuous on some neighborhood of $r$.

Indeed, by the definition of $p_{\infty,\lambda}$ after linear variable change $u=v/\lambda$ one has
\begin{equation} \label{eq:J1}
      J(\lambda)=\int_0^\infty \varphi(v/\lambda) \, p_{\infty,1} (v) \, dv.
\end{equation}
Recall that $\varphi$ is now a convex function vanishing at $(-\infty,0]$. Hence, it is non-decreasing.
It follows from \eqref{eq:J1} that $J(\cdot)$ is non-increasing.

Furthermore, from \eqref{eq:pasymp} and \eqref{eq:J1} it follows that $J(\cdot)$ is finite on some neighborhood of $r$. Finally, if $J(\lambda)<\infty$ for some $\lambda\in(0,r)$, then $J(\cdot)$ is continuous on $(\lambda,\infty)$  by continuity of $\varphi$ and by the Lebesgue dominated convergence theorem --
with $\varphi(\cdot/\lambda)\, p_{\infty,1}(\cdot)$ as an integrable, dominating function in \eqref{eq:J1}. 
This is the point where additional assumption \eqref{eq:lambda} crucially comes into play because
in the sequel we will use that $J(\cdot)$ is left-continuous at $r$.

Let us define
\begin{equation*}
	\begin{aligned}
	S_{T,r}
	&:=
	\frac{1}{T}
	\int\limits_0^T   \varphi(\eta'_{T,r}(t))\,dt,
	\\
	A_{T,r} &:= A^1_{T,r}+A^T_{T,r}	
	\end{aligned}
\end{equation*}
and summarize the results obtained above as follows
\begin{equation*}
	\begin{aligned}
		&1) \, S_{T,r+\delta}\leq A_{T,r} + B_{T,r};
		\\
		&2) \, A_{T,r}\pconv 0;
		\\
		&3) \, \E B_{T,r}\leq J(r)  + \frac{\Theta}{T};
		\\
		&4) \, \liminf_{T\rightarrow\infty} S_{T,r+\delta}\asgeq J(r+\delta),
	\end{aligned}
\end{equation*}
where the last inequality follows from~\eqref{eq:liminf_geq_J}. 
If we replace $r+\delta$ by $r$, we obtain
\begin{equation*}
	\begin{aligned}
		&1) \, S_{T,r}\leq A_{T,r-\delta} + B_{T,r-\delta};
		\\
		&2) \, A_{T,r-\delta}\pconv 0;
		\\
		&3)\, \E B_{T,r-\delta}\leq J(r-\delta)+ \frac{\Theta}{T};
		\\
		&4) \, \liminf_{T\rightarrow\infty} S_{T,r}\asgeq J(r).
	\end{aligned}
\end{equation*}
We will prove now that $S_{T,r}\pconv J(r)$ using only properties $1)$ -- $4)$.

For each $y < J(r)$  we have
\begin{eqnarray*}
   \prob(B_{T,r-\delta}< y)
   &\leq&  \prob(S_{T,r}-A_{T,r-\delta}< y)
\\  
  &\leq&  \prob\left(S_{T,r}-A_{T,r-\delta}< y, S_{T,r} \ge \frac{J(r)+y}{2}\right) 
          +  \prob\bigg(S_{T,r}<\frac{J(r)+y}{2}\bigg)
\\  
   &=& \prob\bigg( A_{T,r-\delta}> S_{T,r} - y > \frac{J(r)+y}{2} - y =\frac{J(r)-y}{2}   \bigg) 
      +  \prob\bigg(S_{T,r}<\frac{J(r)+y}{2}\bigg)
\\	
   &\leq& \prob\bigg(A_{T,r-\delta}>\frac{J(r)-y}{2}\bigg)
            + \prob\bigg(S_{T,r}<\frac{J(r)+y}{2}\bigg).
\end{eqnarray*}
From properties $2)$ and $4)$ it follows that both terms tend to zero as $T\to\infty$. Hence,
\begin{equation*}
	\begin{aligned}
	\E(B_{T,r-\delta}-J(r-\delta))^-
	&\leq
	J(r-\delta)\prob(B_{T,r-\delta}<y) + (J(r-\delta)-y)\cdot 1
	\\
	&=
	o(1)  + (J(r-\delta)-y).
\end{aligned}
\end{equation*}
By letting $y\nearrow J(r)$, we obtain
\begin{equation*}
   \limsup_{T\to\infty}	\E(B_{T,r-\delta}-J(r-\delta))^-
	\leq   J(r-\delta)-J(r).
\end{equation*}
Now, using equality
\begin{equation*}
	\E(B_{T,r-\delta}-J(r-\delta))^+
	=  \E(B_{T,r-\delta}-J(r-\delta))
	+  \E(B_{T,r-\delta}-J(r-\delta))^-
\end{equation*}
and property $3)$, we get
\begin{equation}  \label{eq:B+_limsup}
	\limsup_{T\to\infty}  \E(B_{T,r-\delta}-J(r-\delta))^+
	\leq J(r-\delta)-J(r).
\end{equation}

Let us fix two small parameters $\varepsilon>\varepsilon_1>0$ and 
choose $\delta>0$ such that $J(r-\delta)-J(r)<\varepsilon_1$. 
Using Markov's inequality, we obtain
\begin{equation*}
	\begin{aligned}
		\prob(B_{T,r-\delta}>J(r)+\varepsilon)
		&=
		\prob(B_{T,r-\delta}>(J(r)-J(r-\delta))+J(r-\delta)+\varepsilon)
		\\
		&\leq
		\prob(B_{T,r-\delta}>-\varepsilon_1+J(r-\delta)+\varepsilon)
		\\
		&=
		\prob(B_{T,r-\delta}-J(r-\delta)>\varepsilon-\varepsilon_1)
		\\
		&\leq
		\frac{\E(B_{T,r-\delta}-J(r-\delta))^+}{\varepsilon-\varepsilon_1}.
	\end{aligned}
\end{equation*}
Now, by~\eqref{eq:B+_limsup},
\begin{eqnarray*}
		\limsup_{T\to\infty}   \prob(B_{T,r-\delta}> J(r)+ \varepsilon)
		&\leq&
		\limsup_{T\to\infty}
		\frac{\E(B_{T,r-\delta}-J(r-\delta))^+}{\varepsilon-\varepsilon_1}
\\
        &\leq&
		\frac{J(r-\delta)-J(r)}{\varepsilon-\varepsilon_1}
        \leq \frac{\varepsilon_1}{\varepsilon-\varepsilon_1}.
\end{eqnarray*}
From properties $1)$ and $2)$, it follows that
\[
  \limsup_{T\to\infty}   \prob(S_{T,r}> J(r)+ 2\varepsilon) 
  \leq  \frac{\varepsilon_1}{\varepsilon-\varepsilon_1}.
\]
By letting $\varepsilon_1\to 0$, we obtain
\begin{equation*}
	\lim_{T\to\infty}  \prob(S_{T,r}> J(r)+ 2\varepsilon) =0.
\end{equation*}
This fact and property $4)$ prove that
\begin{equation*}
	S_{T,r} \pconv J(r).
\end{equation*}
\end{proof}

%%%%%%%%%%%%%%%%%%%%%%%%%%%%%%%%%%%%%%%%%%
\section{Strong law of large numbers}
%%%%%%%%%%%%%%%%%%%%%%%%%%%%%%%%%%%%%%%%%%
\label{sec:SLLN}

In this section we prove the following result which is a part of the main 
Theorem~\ref{th:main}, cf. \eqref{main_asconv}.

\begin{theorem}
	\label{th:strong_LLN}
	Let a function $\varphi$ be continuous almost everywhere, locally bounded and satisfy the condition of power growth \eqref{eq:power_bound}. 
Then for every $r>0$ it is true that 
  \begin{equation*}
			R_{r}(\varphi)
			\aseq
			\int\limits_{\mathbb R} \varphi(u) p_{\infty,r}(u)\,du.
	\end{equation*}
\end{theorem}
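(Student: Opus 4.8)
The plan is to upgrade the a.s. weak convergence $\nu_T \Rightarrow \nu_\infty$ established in Theorem~\ref{th:taut_string_weak_meas} to convergence of the integrals $\int_{\mathbb R}\varphi\, d\nu_T = \tfrac1T\int_0^T \varphi(\eta'_{T,r}(t))\,dt$, the extra ingredient needed being uniform integrability of $\varphi$ with respect to the family $\{\nu_T\}_{T\ge1}$. First I would fix an exponent $\alpha'>\alpha$, where $\alpha>0$ is the exponent in the power growth condition~\eqref{eq:power_bound} (for definiteness, $\alpha'=\alpha+1$). The auxiliary function $g(u):=|u|^{\alpha'}$ is continuous and itself satisfies~\eqref{eq:power_bound} (with any exponent exceeding $\alpha'$), so Schertzer's theorem~\cite{Schertzer2018} applies to $g$ and yields
\[
   M_T := \int_{\mathbb R}|u|^{\alpha'}\,\nu_T(du)
        = \frac1T\int_0^T |\eta'_{T,r}(t)|^{\alpha'}\,dt
        \ \asconv\ \mathrm{const}<\infty .
\]
In particular there is an a.s. event on which $\sup_{T\ge1}M_T<\infty$, and (intersecting it with the a.s. event from Theorem~\ref{th:taut_string_weak_meas}) on which moreover $\nu_T\Rightarrow\nu_\infty$. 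All the rest of the argument is carried out pathwise on this event.

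Next I would extract the uniform integrability. Combining~\eqref{eq:power_bound} with local boundedness of $\varphi$ gives a constant $c_0$ with $|\varphi(u)|^{\alpha'/\alpha}\le c_0+|u|^{\alpha'}$ for all $u\in\mathbb R$, whence $\int_{\mathbb R}|\varphi(u)|^{\alpha'/\alpha}\,\nu_T(du)\le c_0+M_T$ is bounded uniformly in $T$ (here one uses that $\nu_T$ is a probability measure). Since $\alpha'/\alpha>1$, the de la Vallée-Poussin criterion shows that $\{\varphi\}$ is uniformly integrable with respect to $\{\nu_T\}_{T\ge1}$; in particular $\varepsilon_M:=\sup_{T\ge1}\int_{\{|\varphi|>M\}}|\varphi|\,d\nu_T\to0$ as $M\to\infty$. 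Note also that $|\varphi(u)|\le C(1+|u|^\alpha)$ and, by the explicit density~\eqref{eq:dens_lim_meas}, $\nu_\infty$ has finite moments of all orders, so $\int_{\mathbb R}|\varphi|\,d\nu_\infty<\infty$.

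It remains to pass to the limit. For $M>0$ set $\varphi_M:=(\varphi\wedge M)\vee(-M)$; this function is bounded, its discontinuity set is contained in that of $\varphi$, which is Lebesgue-null, hence $\nu_\infty$-null because $\nu_\infty$ is absolutely continuous. By the portmanteau theorem, $\int\varphi_M\,d\nu_T\to\int\varphi_M\,d\nu_\infty$ as $T\to\infty$ for each fixed $M$. Using $|\varphi-\varphi_M|\le |\varphi|\mathds{1}_{\{|\varphi|>M\}}$ and writing
\[
  \Bigl|\int_{\mathbb R}\varphi\,d\nu_T-\int_{\mathbb R}\varphi\,d\nu_\infty\Bigr|
  \le \varepsilon_M
     + \Bigl|\int_{\mathbb R}\varphi_M\,d\nu_T-\int_{\mathbb R}\varphi_M\,d\nu_\infty\Bigr|
     + \int_{\{|\varphi|>M\}}|\varphi|\,d\nu_\infty,
\]
I would let first $T\to\infty$ (the middle term vanishes) and then $M\to\infty$ (the first term vanishes by uniform integrability, the last by dominated convergence, $\int|\varphi|\,d\nu_\infty<\infty$), obtaining
\[
  \frac1T\int_0^T \varphi(\eta'_{T,r}(t))\,dt = \int_{\mathbb R}\varphi\,d\nu_T
  \ \asconv\ \int_{\mathbb R}\varphi\,d\nu_\infty = \int_{\mathbb R}\varphi(u)p_\infty(u)\,du,
\]
which is precisely $R_r(\varphi)\aseq\int_{\mathbb R}\varphi(u)p_\infty(u)\,du$.

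The main obstacle I anticipate is the uniform integrability step. One must check that the power growth condition genuinely provides control of a moment of order strictly above the order of growth of $\varphi$ — this is exactly where passing from $\alpha$ to $\alpha'$ and invoking Schertzer's theorem for the convex function $|u|^{\alpha'}$ is essential — and one must ensure that the bound $\sup_T M_T$, though random, is finite on the same $\omega$-set where the weak convergence of Theorem~\ref{th:taut_string_weak_meas} holds. A minor but genuine point is that ``continuous almost everywhere'' in the hypothesis refers to Lebesgue measure, whereas the portmanteau theorem needs continuity $\nu_\infty$-a.e.; this gap is closed by the absolute continuity of $\nu_\infty$. The remaining estimates are routine.
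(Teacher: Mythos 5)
Your argument is correct, but it follows a genuinely different route from the paper. The paper splits $\varphi=\min\{\varphi,M\}+(\varphi-M)^+$, handles the bounded truncation via the a.s.\ convergence of sojourn measures (Theorem~\ref{th:taut_string_weak_meas}), and then dominates the unbounded excess $(\varphi-M)^+$ by the \emph{convex} majorant $(c_1+c_2|u|^\alpha-M)^+$, whose a.s.\ limit $R_r$ is identified by combining the weak law of large numbers (Theorem~\ref{th:weakLLN}) with Schertzer's existence of the a.s.\ limit; letting $M\to\infty$ gives the upper bound, and the lower bound is taken from~\eqref{eq:liminf_geq_J}. You instead upgrade the same a.s.\ weak convergence $\nu_T\Rightarrow\nu_\infty$ directly to convergence of integrals via uniform integrability, obtained by applying Schertzer's theorem to the auxiliary function $|u|^{\alpha'}$ with $\alpha'>\alpha$, and then run the standard truncation/portmanteau argument (the point that Lebesgue-a.e.\ continuity suffices because $\nu_\infty\ll\mathrm{Leb}$ is handled correctly, just as in the paper's corollary). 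This buys a single two-sided argument that bypasses Section~\ref{sec:pconv} entirely (no weak LLN, no convex majorant, no separate lower bound), at the price of invoking a higher-order moment and UI machinery; the paper's route avoids higher moments by recycling its convex-function results. One small imprecision: a.s.\ convergence of $M_T$ only gives $\limsup_{T\to\infty}M_T<\infty$, i.e.\ boundedness for $T\ge T_0(\omega)$, not literally $\sup_{T\ge 1}M_T<\infty$ without a local-boundedness-in-$T$ argument; this is harmless here since your final estimate only requires the bound along $T\to\infty$, so define $\varepsilon_M$ with $\sup_{T\ge T_0(\omega)}$ instead.
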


\begin{proof} 
First of all, let us note that that the limit $R_{r}(\varphi)$ exists a.s. by Schertzer's result. We only need to identify it with the integral from Theorem's assertion.

Without loss of generality, we may assume that $\varphi\ge 0$.

Furthermore, let us note that in two cases the theorem's assertion follows immediately from our previous results.

\begin{itemize}
\item If  $\varphi$ is almost everywhere continuous  and {\it bounded}, then the asserted form of the limit follows from the a.s. convergence of the sojourn measures 
(Theorem~\ref{th:taut_string_weak_meas});
\item If $\varphi$ is {\it convex}, then we have convergence in probability to the required limit by Theorem~\ref{th:weakLLN}, hence the a.s.-limit $R_{r}(\varphi)$ is the same (note that the power growth condition yields \eqref{eq:lambda}, hence Theorem~\ref{th:weakLLN} may be applied) .
\end{itemize}

Let us now consider the general case. Let $M>0$. We represent $\varphi$ as
$\varphi=\varphi_{1,M} + \varphi_{2,M}$, where
\begin{eqnarray*}
    \varphi_{1,M}(u) &=& \min\{\varphi(u),M\};
\\    
     \varphi_{2,M}(u) &=& (\varphi(u)-M)^+. 
\end{eqnarray*}
Since the function $\varphi_{1,M}$ is almost everywhere continuous and bounded, 
we have
\[
  R_r\left( \varphi_{1,M} \right) 
 = \int\limits_{\mathbb R} \varphi_{1,M}(u) p_{\infty,r}(u)\,du 
 \leq \int\limits_{\mathbb R} \varphi(u) p_{\infty,r}(u)\,du.
\]
By the power growth condition, for some  $c_1,c_2>0,\alpha\ge 1$  for all 
$u\in\mathbb{R}$ it is true that $\varphi(u)\le c_1+c_2 |u|^\alpha:= L(u)$. 
Therefore,
\[
   \varphi_{2,M}(u)  \le (L(u)-M)^+,
\]
where the right hand side is a convex function. Hence,
\[
    R_r\left( \varphi_{2,M} \right) \leq  R_r\left( (L(\cdot)-M)^+ \right)
    = \int\limits_{\mathbb R} (L(u)-M)^+ p_{\infty,r}(u)\,du.
\]
Thus,
\begin{eqnarray*}
  R_r(\varphi) =   R_r\left( \varphi_{1,M} \right) +  R_r\left( \varphi_{2,M} \right) 
  \le \int\limits_{\mathbb R} \varphi(u) p_{\infty,r}(u)\,du
    +  \int\limits_{\mathbb R} (L(u)-M)^+ p_{\infty,r}(u)\,du.
\end{eqnarray*}
Take a limit in $M\nearrow+\infty$  and notice that the latter integral tends to zero.
Therefore,
\[
   R_r(\varphi) \le \int\limits_{\mathbb R} \varphi(u) p_{\infty,r}(u)\,du.
\]
The opposite estimate follows from \eqref{eq:liminf_geq_J}.
\end{proof}

%%%%%%%%%%%%%%%%%%%%%%%%%%%%%%%%%%%%%%%%%%%%%%%%%%%%%%%%%%%%
\section{Various proofs}
\label{sec:addendum}

%%%%%%%%%%%%%%%%%%%%%%%%%%%%%%%%%%%%%%%%%%%%%%%%%%%%%%%%%%%%
\subsection{Proof of Theorem \ref{th:contrex}}
%%%%%%%%%%%%%%%%%%%%%%%%%%%%%%%%%%%%%%%%%%%%%%%%%%%%%%%%%%%%	
	Let us fix $r>0$ and denote $a:=\frac{r^2}{2}$. Let $T_n:=n$ and consider the events
	\begin{equation*}
	    A_n:= \left\{ W(T_n)- W\left(T_n-\tfrac{a}{\ln T_n}\right) \ge r \right\}.
	\end{equation*} 
	Since for large $n$ the intervals $[T_n-\tfrac{a}{\ln T_n}, T_n]$ do not overlap, the corresponding events $A_n$ are independent.
	We also have
	\begin{equation*}
	\prob(A_n) =\prob\left( {\mathcal N} \ge \frac{r}{(a/\ln T_n)^{1/2}}\right),
	\end{equation*}
	where  ${\mathcal N}$ is a standard normal variable. By substituting the values of $a$ 
    and $T_n$, we obtain
	\begin{eqnarray*}
		\prob(A_n) &\sim& \frac{1}{\sqrt{2\pi}} \frac {(a/\ln n)^{1/2}}{r} \exp\left( - \frac{r^2}{2a} \, \ln n     \right)
		\\
		&=&  \frac{1}{2\sqrt{\pi}}  (\ln n)^{-1/2}    n^{-1}. 
	\end{eqnarray*}
	The series $\sum_n \prob(A_n)$ diverges, thus by Borel--Cantelli lemma the events $A_n$ occur infinitely often.
	
	Let now $T>0$ be such that
	\begin{equation*}
	     W(T)- W\left(T-\tfrac{a}{\ln T}\right) \ge r.
	\end{equation*}
	Then for every function $g$ such that
	\begin{equation}\label{gjump}
		\big| g\left(T-\tfrac{a}{\ln T}\right) -  W\left(T-\tfrac{a}{\ln T}\right) \big| \le \frac r2;
		\qquad  g(T)=W(T),
	\end{equation}
	it is true that
	\begin{equation*}
	   g(T)- g\left(T-\tfrac{a}{\ln T}\right) 
          \ge W(T) -\left(  W\left(T-\tfrac{a}{\ln T}\right) +\frac r2   \right)
	   \ge r-\frac r2 = \frac r2.
	\end{equation*}
	If $g\in \text{AC}[0,T]$, then by convexity of the function $\varphi$ Jensen inequality yields
	\[
		\int_{T-\tfrac{a}{\ln T}}^{T} \varphi(g'(t))\, dt 
		\ge  \frac{a}{\ln T}\  \varphi\left( \frac{ g(T)- g\left(T-\tfrac{a}{\ln T}\right)}{a/\ln T}\right).
    \]
    For sufficiently large $T$ we infer
	\[
        \int_{T-\tfrac{a}{\ln T}}^{T} \varphi(g'(t))\, dt 
		\ge \frac{a}{\ln T} \, 
        \exp\left( \frac{\lambda r \cdot r/2}{a/\ln T}  \right) = \frac{a}{\ln T}\, T^{\lambda}.
	\]
	Since the taut string $\eta_{T,r}$ satisfies conditions \eqref{gjump}, it also obeys
 inequalities
	\begin{equation*}
	   \int_0^T \varphi(\eta'_{T,r}(t)) dt \ge  \int_{T-\tfrac{a}{\ln T}}^T \varphi(\eta'_{T,r}(t))\, dt 
	\ge  \frac{a}{\ln T}\, T^{\lambda}\gg T,
	\end{equation*}
	where in the last step we used the assumption $\lambda>1$. Therefore, since the events $A_n$ occur infinitely often, the stated estimates hold for arbitrary $r>0$ and some arbitrarily large $T$, which proves the theorem.

%%%%%%%%%%%%%%%%%%%%%%%%%%%%%%%%%%%%%%%%%%%%%%%%%%%%%%%%%%%%
\subsection{The limit computation for quadratic function \texorpdfstring{$\vartheta_2$}{vartheta2}}
%%%%%%%%%%%%%%%%%%%%%%%%%%%%%%%%%%%%%%%%%%%%%%%%%%%%%%%%%%%%%
\label{sec:t2_calc}

As we know from 
\eqref{main_asconv},
\begin{equation*}
	R_r(\vartheta_2)
	\aseq
	\int\limits_{\mathbb R}	\vartheta_2(u)p_{\infty,r}(u)du
	=	\int\limits_{-\infty}^{\infty} u^2r \frac{(ru\coth(ru)-1)}{\sh^2(ru)} \, du
	:=	I.
\end{equation*}
Let us show that $I=\frac{\pi^2}{6r^2}$. Indeed,
\begin{equation*}
	I
	=	r^{-2} \int\limits_{\mathbb R} v^2 \frac{(v\coth(v)-1)}{\sh^2(v)} \, dv
	=	r^{-2} \bigg(
	  \int\limits_{\mathbb R} \frac{v^3\ch(v)}{\sh^3(v)}\, dv
	  -
	  \int\limits_{\mathbb R} \frac{v^2}{\sh^2(v)} \, dv
	         \bigg).
\end{equation*}
Integration by parts yields
\begin{equation*}
	\int\limits_{\mathbb R} \frac{v^3\ch(v)}{\sh^3(v)} \, dv
	= \int\limits_{\mathbb R}	v^3 \,
  	d\bigg(
	     \frac{1}{2\sh^2(v)}
	   \bigg)
	= \frac{3}{2} \int\limits_{\mathbb R} \frac{v^2}{\sh^2(v)} \, dv.
\end{equation*}
Therefore,
\begin{equation*}
	I=	\frac{r^{-2}}{2}\int\limits_{\mathbb R} \frac{v^2}{\sh^2(v)} \, dv
	=r^{-2} \int\limits_{0}^{+\infty} \frac{v^2}{\sh^2(v)} \, dv
	=\frac{\pi^2}{6r^2}.
\end{equation*}
where the last equality is a special case of~\cite[formula (3.527.2)]{GR_2015_Eng}.

%%%%%%%%%%%%%%%%%%%%%%%%%%%%%%%%%%%%%%%%%%%%%%%%
\subsection{Truncated variations and absolutely continuous functions}
%%%%%%%%%%%%%%%%%%%%%%%%%%%%%%%%%%%%%%%%%%%%%%%%
\label{subsec:TVAC}

\begin{proof}[Proof of Lemma~{\normalfont\ref{lemma:TVr_AC}}] 
	Let $\varepsilon>0$, and a function $h$ on $[a,b]$ attains the value of 
    $TV^{r-\varepsilon}(f,[a,b])$,
	i.e. $TV^0(h,[a,b])= TV^{r-\varepsilon}(f,[a,b])$ and  
    $\|h-f\|_{[a,b]}\le \tfrac{r-\varepsilon}{2}$.
	
	Let us extend $h$ by the constants from $[a,b]$ to $\mathbb{R}$ and approximate $h$ by convolutions with smooth kernels
	\begin{equation*}
	h^\delta(t):= \int_\mathbb{R} \Phi^\delta(v) h(t+v) dv 
               = \int_\mathbb{R} \Phi^\delta(s-t) h(s) ds.
	\end{equation*}
	We assume the usual properties of the kernels:
	\begin{eqnarray*}
		\Phi^\delta(\cdot)\ge 0; \quad \int_\mathbb{R} \Phi^\delta(v) dv =1 ; \quad \int_\mathbb{R} |(\Phi^\delta)'(v)| dv<\infty;
		\\
		\lim_{\delta\to 0}  \int_{|v|>\theta} \Phi^\delta(v) dv =0 
        \qquad \textrm{for every } \theta>0.
	\end{eqnarray*}
	Let us prove that
	
	1) $h^\delta\in \text{AC}[a,b]$;
	
	2) $TV^0(h^\delta,[a,b]) \le  TV^0(h,[a,b])= TV^{r-\varepsilon}(f,[a,b])$;
	
	3) For sufficiently small $\delta$ it is true that $\|h^\delta-f\|_{[a,b]}\le \tfrac{r}{2}$.
	
	Then we have
	\begin{equation*}
	\begin{aligned}
	\inf\left\{ \int_a^b |g'(t)| dt \ ; \  g\in \text{AC}[a,b], \|g-f\|_{[a,b]}\le \tfrac r2  \right\}
	&\leq
	TV^0(h^\delta,[a,b]) \\
	&\leq TV^{r-\varepsilon}(f,[a,b]).
	\end{aligned}
	\end{equation*}
	Taking into account that the function $r\mapsto TV^r(f,[a,b])$ is continuous (cf.~\cite{Lochowski2013}), by letting $\varepsilon$ to zero,
	we find
	\begin{equation*}
	  \inf\left\{ \int_a^b |g'(t)| dt \ ; \  g\in \text{AC}[a,b], \|g-f\|_{[a,b]}\le 
        \tfrac r2  \right\}
	  \le   TV^{r}(f,[a,b]).
	\end{equation*}
	The opposite inequality is obvious.
	
	It remains to prove properties 1) \,--\, 3).
	
	1) The function $h^\delta$ is differentiable and its derivative is uniformly bounded because
	\begin{eqnarray*}
		|(h^\delta)'(t)| = \left| \int_\mathbb{R} (\Phi^\delta)'(s-t) h(s) ds  \right|
		&\le&
		\int_\mathbb{R} \left|(\Phi^\delta)'(s-t)\right| ds  \max_{s\in[a,b]} |h(s)|
		\\
		&=&  \int_\mathbb{R} \left|(\Phi^\delta)'(v)\right| dv  \max_{s\in[a,b]} |h(s)|.
	\end{eqnarray*}
	Therefore, $h^\delta\in \text{Lip}[a,b]\subset \text{AC}[a,b]$.
	
	2) For all $a_1\le...\le a_m$ from $[a,b]$ we have
	\begin{eqnarray*}
		\sum_{i=1}^{m-1} |h^\delta(a_{i+1})- h^\delta(a_{i})|
		&=&   \sum_{i=1}^{m-1} \left| \int_\mathbb{R} \Phi^\delta(v) \left(h(a_{i+1}+v)- h(a_{i}+v)\right)dv \right|
		\\
		&\le&  \int_\mathbb{R} \Phi^\delta(v) \sum_{i=1}^{m-1} \left| h(a_{i+1}+v)- h(a_{i}+v)\right| \, dv
		\\
		&\le& \int_\mathbb{R} \Phi^\delta(v)\, dv \ TV^0(h,[a,b]) = TV^0(h,[a,b]).
	\end{eqnarray*}
	Therefore, $TV^0(h^\delta,[a,b])\le TV^0(h,[a,b])$.
	
	3) We have to prove that for small $\delta$ and all $\tau\in[a,b]$ it is true that
	\begin{equation*}
    	f(\tau) - \frac{r}2 \le h^\delta(\tau)\le f(\tau)+ \frac{r}{2}.
	\end{equation*}
	We will prove only the first inequality; the second one can be proved in the same way. For every  $\tau$ it is true that
	\begin{eqnarray*}
		h^\delta(\tau) &=& \int_\mathbb{R} \Phi^\delta(v) h(\tau+v)\, dv
		\ge \int_\mathbb{R} \Phi^\delta(v) \left[ f(\tau+v)- \frac{r-\varepsilon}{2}\right]  dv
		\\
		&=&   \int_\mathbb{R} \Phi^\delta(v) f(\tau+v)\, dv - \frac{r-\varepsilon}{2}.
	\end{eqnarray*}
	Therefore,
	\begin{equation*}
	   h^\delta(\tau)-f(\tau) 
         \ge  \int_\mathbb{R} \Phi^\delta(v) [f(\tau+v)-f(\tau)] \, dv - 
         \frac{r-\varepsilon}{2}.
	\end{equation*}
	Moreover, for every $\theta>0$
	\begin{eqnarray*}
		&& \left| \int_\mathbb{R} \Phi^\delta(v) [f(\tau+v)-f(\tau)] dv \right|
		\le  \int_\mathbb{R} \Phi^\delta(v) \left|f(\tau+v)-f(\tau)\right| dv
		\\
		&\le& \max_{|v|\le\theta}|f(\tau+v)-f(\tau)| + 2\max_{[a,b]} |f|  \int_{|v|>\theta} \Phi^\delta(v) dv.
	\end{eqnarray*}
	
	Here, by a kernels' property, the second term converges to zero for every $\theta$. The first term can be made arbitrarily small uniformly in $\tau$ by the choice of $\theta$ (it is here when the continuity of $f$ is used).
	
	Therefore, for sufficiently small $\delta$ and all $\tau\in[a,b]$ it is true that
	\begin{equation*}
	  \left| \int_\mathbb{R} \Phi^\delta(v) [f(\tau+v)-f(\tau)]\, dv \right| 
        \le \frac{\varepsilon}{2},
	\end{equation*}
	which yields the desired
	\begin{equation*}
	    h^\delta(\tau)-f(\tau) \ge - \frac {r}{2}.
	\end{equation*}
\end{proof}
\medskip

The representation \eqref{eq:UTVAC} can be proved exactly in the same way, just with replacement of the function $x\mapsto |x|$ with the function $x\mapsto x^+$.
\medskip

Finally, let us prove representation \eqref{eq:UTV_integral}. Let $g\in \text{AC}[a,b]$
and $a\leq t_1<\dots<t_n\leq b$. Then
\[
   \sum_{k=1}^{n-1} (g(t_{k+1})-g(t_k))^+ 
   \le  \sum_{k=1}^{n-1} \int_{t_{k}}^{t_{k+1}} g'(t)^+\, dt \le \int_a^b g'(t)^+\, dt.
\]
By maximizing over partitions, we obtaion
\[
      UTV^0(g) \le \int_a^b g'(t)^+\, dt.
\]

In order to prove the opposite bound, let us fix an $\varepsilon>0$ and choose a step function $h$ such that $||g'-h||_1:=||g'-h||_{L_1[a,b]}\le \varepsilon$. 
Let $a= t_1<\dots<t_n=b$ be a partition such that the function $h$ is constant on all corresponding intervals.

If an interval $[t_{k},t_{k+1}]$  is such that $h\big|_{[t_{k},t_{k+1}]}\ge 0$,
then
\[
   \int_{t_{k}}^{t_{k+1}} g'(t)^- dt \leq \int_{t_{k}}^{t_{k+1}} |h(t)-g'(t)| \, dt,
\]
hence,
\begin{eqnarray*}
     (g(t_{k+1})-g(t_k))^+  &\ge&  g(t_{k+1})-g(t_k) 
     =  \int_{t_{k}}^{t_{k+1}} g'(t)\, dt
\\
    &=& \int_{t_{k}}^{t_{k+1}} g'(t)^+\, dt -  \int_{t_{k}}^{t_{k+1}} g'(t)^-\, dt
\\
   &\geq& \int_{t_{k}}^{t_{k+1}} g'(t)^+\, dt - \int_{t_{k}}^{t_{k+1}} |h(t)-g'(t)| \, dt.
\end{eqnarray*}

Otherwise, if an interval $[t_{k},t_{k+1}]$ is such that $h\big|_{[t_{k},t_{k+1}]}< 0$,
then
\[
   \int_{t_{k}}^{t_{k+1}} g'(t)^+ dt \leq \int_{t_{k}}^{t_{k+1}} |h(t)-g'(t)| \, dt.
\]
By summing up over all over all intervals and splitting the sum in two corresponding
groups of terms we obtain
\begin{eqnarray*}
  UTV^0(g) &\geq& \sum_{k=1}^{n-1} (g(t_{k+1})-g(t_k))^+  
  \geq \sum_{k:h\big|_{[t_{k},t_{k+1}]}\ge 0}  (g(t_{k+1})-g(t_k))^+  
\\
     &\geq& \sum_{k:h\big|_{[t_{k},t_{k+1}]}\ge 0} 
      \left[ \int_{t_{k}}^{t_{k+1}} g'(t)^+\, dt - \int_{t_{k}}^{t_{k+1}} |h(t)-g'(t)| \, dt\right]
\\
       &\geq& \left[ \sum_{k:h\big|_{[t_{k},t_{k+1}]}\ge 0} 
       \int_{t_{k}}^{t_{k+1}} g'(t)^+\, dt \right] - ||h-g'||_1
\\
       &\geq&  \int_a^b g'(t)^+\,dt - \left[ \sum_{k:h\big|_{[t_{k},t_{k+1}]}< 0} 
       \int_{t_{k}}^{t_{k+1}} g'(t)^+\, dt \right] - ||h-g'||_1
\\
       &\geq&  \int_a^b g'(t)^+\,dt - \left[ \sum_{k:h\big|_{[t_{k},t_{k+1}]}< 0} 
       \int_{t_{k}}^{t_{k+1}}  |h(t)-g'(t)| \, dt \right] - ||h-g'||_1
\\
       &\geq&  \int_a^b g'(t)^+\,dt - 2||h-g'||_1 \ge \int_a^b g'(t)^+\,dt - 2\varepsilon.
\end{eqnarray*}
By letting $\varepsilon\to 0$, we obtain 
\[
   UTV^0(g) \geq \int_a^b g'(t)^+\,dt.
\]

%%%% END OF OUR ARTICLE  %%%%%%%%%%%%%%%%%%%%%%%%%%%%%%%%%%%%%%%%%%%

%%%%%%%%%%%%%%%%%%%%%%%%%%%%%%%%%%%%%%%%%%%%%%%%%%%%%%%%%%%%%%%%%%%
%%                                                               %%
%% Supplementary Material, if any, should be provided in         %%
%% {supplement} environment  with title and short description.   %%
%%                                                               %%
%%%%%%%%%%%%%%%%%%%%%%%%%%%%%%%%%%%%%%%%%%%%%%%%%%%%%%%%%%%%%%%%%%%

%%\begin{supplement}
%%\stitle{Title of Supplement A.}
%%\sdescription{Short description of Supplement A.}
%%\end{supplement}
%%\begin{supplement}
%%\stitle{Title of Supplement B.}
%%\sdescription{Short description of Supplement B.}
%%\end{supplement}

%%%%%%%%%%%%%%%%%%%%%%%%%%%%%%%%%%%%%%%%%%%%%%%%%%%%%%%%%%%%%%%%%%%
%%                                                               %%
%% Use the two commands below for producing your bibliography    %%
%% with bibtex, then comment again the commands and include the  %%
%% content of the .bbl file in this file below the commands.     %%
%%                                                               %%
%%%%%%%%%%%%%%%%%%%%%%%%%%%%%%%%%%%%%%%%%%%%%%%%%%%%%%%%%%%%%%%%%%%

\bibliographystyle{ugost2008mod.bst}
\bibliography{WienerEnergy.bib}

% add below the content of your .bbl file produced by bibtex.
{\bf Mikhail Lifshits},   mikhail@lifshits.org.

{\bf Andrei Podchishchailov}, andrei.podch@yandex.ru.
\medskip

\noindent St.Petersburg State University.
Russia, 191023, St.Petersburg, Universitetskaya emb. 7/9.

%%%%%%%%%%%%%%%%%%%%%%%%%%%%%%%%%%%%%%%%%%%%%%%%%%%%%%%%%%%%%%%%%%%
%%                                                               %%
%% You may add acknowledgments (optional).                       %%
%%                                                               %%
%%%%%%%%%%%%%%%%%%%%%%%%%%%%%%%%%%%%%%%%%%%%%%%%%%%%%%%%%%%%%%%%%%%
\begin{acks}
We are very grateful to the referee for careful reading of our article and for
useful advice pointing out the unclear passages and helping to improve the presentation.
\end{acks}

%%%%%%%%%%%%%%%%%%%%%%%%%%%%%%%%%%%%%%%%%%%%%%%%%%%%%%%%%%%%%%%%%%%
%%                                                               %%
%% You have reached the end of your document.                    %%
%%                                                               %%
%%%%%%%%%%%%%%%%%%%%%%%%%%%%%%%%%%%%%%%%%%%%%%%%%%%%%%%%%%%%%%%%%%%

\end{document}